\documentclass[a4paper, 12pt]{article}
\usepackage[sort&compress]{natbib}
\bibpunct{(}{)}{;}{a}{}{,} 

\usepackage{algorithm, algorithmicx, algpseudocode}
\usepackage{amsthm, amsmath, amssymb, mathrsfs, multirow, url, subfigure}
\usepackage{graphicx}
\usepackage{ifthen} 
\usepackage{amsfonts}
\usepackage{bbm}
\usepackage[usenames]{color}
\usepackage{fullpage}
\usepackage{csquotes}
 \DeclareMathOperator{\sign}{sign}



\theoremstyle{plain} 
\newtheorem{theorem}{Theorem}

\newtheorem{lemma}{Lemma}
\newtheorem{assumption}{Assumption}
\theoremstyle{definition}

\theoremstyle{remark}

\newtheorem{example}{Example}

\newcommand{\GG}{\mathbb{G}}

\newcommand{\PP}{\mathbb{P}}
\newcommand{\RR}{\mathbb{R}}
\newcommand{\XX}{\mathbb{X}}

\newcommand{\eps}{\varepsilon}

\newcommand{\iid}{\overset{\text{\tiny iid}}{\sim}}

\newcommand{\nm}{\mathsf{N}}

\newcommand{\dpp}{\mathsf{DP}}

\newcommand{\lap}{\mathsf{Lap}}
\newcommand{\gam}{\mathsf{Gamma}}

\newcommand{\one}{1} 
\newcommand{\diag}{\mathrm{diag}}
\newcommand{\Z}{\mathscr{Z}}


\title{Gibbs posterior inference on multivariate quantiles}
\author{
Indrabati Bhattacharya\footnote{Department of Biostatistics and Computational Biology, University of Rochester Medical Center, {\tt indrabati\_bhattacharya@urmc.rochester.edu}} \quad and \quad Ryan Martin\footnote{Department of Statistics, North Carolina State University, {\tt rgmarti3@ncsu.edu}}
}
\date{}

\begin{document}

\maketitle 


\begin{abstract}
Bayesian and other likelihood-based methods require specification of a statistical model and may not be fully satisfactory for inference on quantities, such as quantiles, that are not naturally defined as model parameters.  In this paper, we construct a direct and model-free Gibbs posterior distribution for multivariate quantiles.  Being model-free means that inferences drawn from the Gibbs posterior are not subject to model misspecification bias, and being direct means that no priors for or marginalization over nuisance parameters are required.  We show here that the Gibbs posterior enjoys a root-$n$ convergence rate and a Bernstein--von Mises property, i.e., for large $n$, the Gibbs posterior distribution can be approximated by a Gaussian.  Moreover, we present numerical results showing the validity and efficiency of credible sets derived from a suitably scaled Gibbs posterior. 

\smallskip

\emph{Keywords and phrases:} Bernstein--von Mises phenomenon; concentration rate; credible sets; learning rate; multivariate median.
\end{abstract}

\section{Introduction}
\label{S:intro}

In multivariate analysis, often the quantity of interest is the multivariate mean vector. However, there are situations when the mean is not a very good measure of location, for example, when the data is skewed, has outliers, etc. In such situations, a multivariate median would be a much more robust measure of the distribution's center.  Unfortunately, there is no universally accepted definition of a multivariate median, because there is no objective basis of ordering the data points in higher dimensions. Over the years, various definitions of multivariate medians and, more generally, multivariate quantiles have been proposed; see \citet{small1990survey} for a comprehensive review. 

The most common version of a multivariate median is called the $\ell_1$-median, which is characterized through an $\ell_1$-optimization problem.  Define 
\[ \ell_\theta(x) = \|x-\theta\|_r - \|x\|_r, \]
where $\|x\|_r = (\sum_{j=1}^d |x_j|^r)^{1/r}$ is the usual $\ell_r$-norm of a $d$-dimensional vector $x$ in $\RR^d$, for $r \in (1,\infty)$ a fixed constant.  Following \citet{small1990survey}, the $\ell_1$-median of the random vector $X \sim P$, taking values in $\RR^d$, is 
\[ \theta(P) = \arg\min_\theta P \, \ell_\theta, \]
where we use the notation $Pf = \int f \, dP$ to denote the expected value of a random variable $f(X)$ with respect to the distribution $P$.  The special case $r=2$ is called the spatial median and was studied in, e.g., \citet{brown1983statistical}.  Additional details about $\ell_1$-medians and, more generally, about multivariate quantiles are given in Section~\ref{SS:medians} below. 

In statistical applications, the distribution $P$ is unknown, but an independent and identically distributed (iid) sample $X_1,\ldots,X_n$ of random vectors in $\RR^d$ are available from $P$.  Based on the above formulation, an immediate strategy for estimating the $\ell_1$-median is to follow the definition of $\theta(P)$ but replace the distribution $P$ with the empirical distribution, i.e., with $\PP_n = n^{-1} \sum_{i=1}^n \delta_{X_i}$, where $\delta_x$ denotes a point-mass distribution at the point $x \in \RR^k$.  That is, the standard point estimate of $\theta(P)$ is 
\begin{equation}
\label{eq:sample.median}
\hat\theta_n = \theta(\PP_n) =\arg\min_{\theta} \PP_n \, \ell_\theta.
\end{equation}
The spatial sample median is a highly robust estimator of location, in particular, its breakdown point is $1/2$. Also, \citet{mottonen2010asymptotic} investigated the asymptotic properties of spatial median and proved its asymptotic normality.  

Beyond estimation, if the goal is probabilistic inference on a multivariate median or quantile, i.e., via a ``posterior distribution,'' then the chief difficulty is that these are not naturally described as parameters in a statistical model.  That is, no standard or otherwise ``reasonable'' model for multivariate data will include a quantile in its parametrization, so some potentially dangerous non-linear marginalization \citep[e.g.,][]{fraser2011, martin.nonadditive} would typically be required.  More importantly, with specification of a model comes the risk of model misspecification bias, and, since our quantity of interest is well-defined without a model, it is not clear what can be gained by working in a model-based framework to balance out the risk of misspecification bias.  
However, there are advantages to having a posterior probability distribution on which to base inferences; for example, a posterior density provides a nice visual summary of uncertainty, credible regions can be immediately read off from it without asymptotic approximations, and prior information about the quantity of interest can be incorporated whenever it is available.  A Bayesian approach that both achieves these desirable features (more or less) and avoids the risk of model misspecification bias must be nonparametric.  That is, assign a prior distribution---say, a Dirichlet process \citep{ferguson1973}---to the infinite-dimensional $P$, get the corresponding posterior, and then do the non-trivial marginalization to $\theta=\theta(P)$.  
We call this an {\em indirect} approach.  Aside from computational challenges, a downside of the indirect approach is that incorporating available prior information about the quantile is difficult; probably the best option is to choose a Dirichlet process base measure to have quantile equal to a prior guess, but it is not clear how this (and other features of the specified base measure) affect the marginal posterior for the quantile.  

Is it possible to develop a posterior for the multivariate quantile in a more {\em direct} way, without marginalization, etc.?  Here we investigate the construction of a {\em Gibbs posterior} for a multivariate quantile.  On one hand, like M-estimation, this approach uses a suitable loss function, rather than a likelihood, to connect the quantity of interest to the observed data, which eliminates the risk of model misspecification bias.  On the other hand, like Bayesian inference, it produces a genuine posterior distribution and allows for the direct incorporation of prior information.  After some background about multivariate quantiles and Gibbs posteriors in Section~\ref{S:background}, we define our object of interest, namely, the Gibbs posterior distribution for a multivariate quantile, and investigate its properties.  In particular, in Section~\ref{SS:theory}, we first establish that the Gibbs posterior concentrates around the true quantile at the usual root-$n$ rate and, second, that it has an asymptotic Gaussian approximation in the Bernstein--von Mises sense.  Unfortunately, the covariance matrix in this Gaussian approximation is ``wrong'' in the sense that it does not match that of the M-estimator around which it is centered.  Fortunately, the Gibbs posterior depends on a user-specified {\em learning rate} \citep[e.g.,][]{bissiri2016general, grunwald.ommen.scaling, syring2018calibrating} which can be tuned to at least partially correct for the covariance matrix mismatch. We use a bootstrap-based calibration algorithm proposed by \citet{syring2018calibrating} for choosing the learning rate, which we describe in Section~\ref{SS:learning}.  In Section~\ref{SS:sim}, we  compare the finite-sample performance of our proposed Gibbs posterior inference to that based on existing Bayesian approaches.  The two key take-aways are: (a)~the Gibbs posterior outperforms the model-based parametric Bayesian approach in misspecified situations, and (b)~the Gibbs posterior credible sets are at least as good as the nonparametric Bayes credible sets in terms of coverage but they are more efficient in some cases.  We also apply the Gibbs posterior approach to infer the spatial median based on a real data set in Section~\ref{SS:real}, for which the assumption of normality is debatable.  We show that the Gibbs posterior outperforms a normality-based Bayesian solution in terms of out-of-sample risk, implying that our Gibbs solution avoids the inherent bias coming from the normality assumption.  Some concluding remarks are given in Section~\ref{S:discuss} and proofs of the two main theorems are presented in the Appendix.  


\section{Background}
\label{S:background}

\subsection{Multivariate quantiles}
\label{SS:medians}

Again, the lack of a well-defined ordering of multivariate observations creates a major issue in defining multivariate quantiles. \citet{abdous1992note} and \citet{babu1989joint} investigated the coordinate-wise medians and quantiles.  However, the coordinate-wise quantiles do not provide much information about the joint distribution of the vector and they also lack some desirable geometric properties, namely, rotational invariance. 
To fill this gap, \citet{chaudhuri1996geometric} introduced the notion of geometric quantiles based on the geometric configuration of multivariate data clouds.  These quantiles are natural generalizations of the $\ell_1$-median. For univariate observations $X_1,\dots,X_n \in \RR$, the sample $\alpha$th quantile $\alpha \in (0,1)$ is obtained by minimizing $\xi \mapsto \sum_{i=1}^n \{|X_i-\xi| +u (X_i-\xi)\}$, with $u=2\alpha -1$. Generalizing this idea to higher dimensions, the $d$-dimensional geometric quantiles, with $\ell_r$-norm, are indexed by points in the open unit ball $B_q^{(d)}=\{u \in \RR^d: \| u \|_q < 1\}$, where $q$ is the H\"older conjugate of $r$, i.e., $r^{-1}+q^{-1}=1$. Thus, for $u \in B_q^{(d)}$, the $d$-dimensional sample $u$-quantile is then defined as
\begin{equation}
    \widehat{Q}_n(u)= \arg \min_{\xi \in \RR^d}\frac{1}{n} \sum_{i=1}^n \Phi_r(u, X_i-\xi),
    \label{eq:sample.quantile}
\end{equation}
where $\Phi_r(u,t)=\| t \|_r + \langle u,t \rangle$, with $\langle \cdot,\cdot \rangle$ being the usual inner product. It is easy to see that $\widehat{Q}_n(0)$ is the same as the $\ell_1$-median; $\hat{\theta}_n$. The population analog of $\widehat{Q}_n(u)$ is given by
\begin{equation}
\label{eq:quantile}
    Q_P(u)= \arg\min_{\xi \in \RR^d}P\{\Phi_r(u,X-\xi)-\Phi_r(u,X)\}.
\end{equation}
\citet{chaudhuri1996geometric} showed that the geometric quantiles are both equivariant under location transformation and homogeneous scale transformation of the individual coordinates. 

Chaudhuri's approach has received considerable attention in the literature, and has also been extended to regression contexts, for example, in \citet{chakraborty1999, chakraborty2003}.  One other notable approach to generalizing univariate quantiles to multivariate case is the directional quantile approach developed by \citet{hallin2010multivariate}. A directional quantile $\tau$ is a function of two components, namely, a direction vector $u$ and a depth $\gamma \in (0,1)$. Then the $\tau=u\gamma$ directional quantile, denoted by $\lambda_{\tau}$ is a hyperplane through $\RR^d$. 

In a Bayesian setting, \citet{bhattacharya2019bayesian} considered the use of a Dirichlet process prior on the underlying distribution $P$, and explored properties of the corresponding marginal posterior distribution of $Q_P(u)$ or, more precisely, a Bayesian bootstrap approximation thereof.  


\subsection{Gibbs posterior distributions}
\label{SS:gibbs}

The Gibbs measure has its origins in statistical physics but a version of it has received attention in the statistics, machine learning, and econometrics literature; see, e.g., \citet{bissiri2016general}, \citet{zhang2006a, zhang2006b}, and \citet{chernozhukov2003mcmc}.  Some recent statistical applications include data mining \citep{jiang2008gibbs}, clinical trials \citep{syring2017gibbs}, image analysis \citep{syring2016robust}, actuarial science \citep{syring2019gibbs}, and classifier performance assessment \citep{wang.martin.auc}.  Below we define the Gibbs posterior and some features that will be relevant in what follows.  

Let $X^n = (X_1,\ldots,X_n)$ be an iid sample from some distribution $P$.  Suppose there is some functional $\theta=\theta(P)$ that we are interested in estimating and making inference about.  By the way this problem has been stated, it should be clear that $\theta$ generally cannot be understood as a model parameter, so we cannot expect that there is a likelihood function that can be used to connect the data to the quantity of interest.  Instead, the setup assumes that the functional is defined via an optimization problem.  That is, there exists a function $\ell_\theta(x)$ such that the true value $\theta^\star$ of $\theta(P)$ is the minimizer of the function $R(\theta) = P\ell_\theta$; here, note that, as is customary in the literature, we denote the quantity of interest and a generic value of it with the same symbol, $\theta$, and distinguish the true value $\theta^\star$ where necessary.  The function $\ell_\theta$ is called the {\em loss} and $R(\theta)$ the corresponding {\em risk}.  Since we do not know $P$, we also do not know the risk, so inference on $\theta$ requires that we replace $P$ with the observed data in some way.  In particular, define the empirical risk as $R_n(\theta) = \PP_n \ell_\theta$, where $\PP_n$ is the empirical distribution of the data $X^n$.  The estimator $\hat\theta_n$ derived by minimizing $R_n(\theta)$ is often called an {\em M-estimator} \citep[e.g.,][]{huber1981}.  

Empirical risk minimization is a common task in machine learning and can be challenging because the data-dependent objective function $R_n$ is not always well-behaved.  As an alternative to optimization, the {\em PAC-Bayes} literature \citep[e.g.,][]{mcallester1999, alquier2008}---where PAC stands for probability approximately correct---proposed to construct a distribution that concentrates on $\theta$ values for which $R_n(\theta)$ is small.  That distribution is the Gibbs posterior and is given by 
\begin{equation}
\Pi_n(B)=\frac{\int_B e^{-\omega nR_n(\theta)} \, \Pi(d\theta)} {\int_{\RR^d} e^{-\omega nR_n(\theta)} \, \Pi(d\theta)}, \quad B \subseteq \RR^d,
\label{eq:gibbs}
\end{equation}
where $\Pi$ is a prior distribution and $\omega > 0$ is called the {\em learning rate}. The choice to use the empirical risk function in \eqref{eq:gibbs} is crucial.  Here we adopt the perspective of \citet{bissiri2016general}, who show that, when---like in our present application---the quantity of interest is defined as the minimizer of the expected loss, $R(\theta) = P \ell_\theta$, the {\em proper generalization of Bayesian inference}, in the sense of coherent updates of beliefs, leads to the formulation in \eqref{eq:gibbs} with the empirical version of the expected loss, $R_n(\theta) = \PP_n \ell_\theta$.  So, while other choices of ``pseudo-posterior distributions'' are possible, these lack justification as a proper generalization of Bayesian inference. 

It should be emphasized that the introduction of the learning rate $\omega > 0$ in \eqref{eq:gibbs} is not an arbitrary choice being made by us, it is a technical artifact of the generalized Bayes posterior construction in, e.g., \citet{bissiri2016general}.  Both the prior and learning rate play crucial roles in determining the Gibbs posterior's practical performance.  Unfortunately, neither are fully determined by the context/data, so effort is required from the user.  
\begin{itemize}
\item Of course, one can use a vague/flat prior, which we do in our simulation study below to avoid confounding the Gibbs posterior's performance with the effects of using an informative prior, but this is not our recommendation.  Since a multivariate quantile is a real-world quantity, and not the parameter of an artificial model, it is possible that genuine prior information is available and, in such cases, that prior information absolutely should be used.  
\vspace{-2mm}
\item The learning rate is analogous to the tuning parameter in machine learning algorithms in the sense that there is no ``true'' $\omega$ that can be learned from the data.  However, data-driven choices of the learning rate are still possible, and a number of such methods have been proposed in the recent literature, e.g., \citet{grunwald2012}, \citet{holmes.walker.scaling}, \citet{lyddon.holmes.walker}, and \citet{syring2018calibrating}; a comparison of these can be found in \citet{gpc.compare}.  More details about learning rate selection are given in Section~\ref{SS:learning}.  
\end{itemize}

\section{Gibbs posteriors for multivariate quantiles}
\label{S:gibbs}

\subsection{Definition}
\label{SS:def}

Suppose we have an iid sample $X_1,\dots,X_n$ from a distribution $P$ on $\RR^d$. Since the $\ell_1$-median $\theta(P)$ is the same as $Q_P(0)$, we will discuss the Gibbs posterior construction for a geometric quantile $Q(u)$ with $\ell_r$-norm for some fixed $r \in (1,\infty)$ and fixed $u \in B_q^{(d)}$. For simplicity, we will denote $\hat{\theta}_n=\hat{Q}_n(u)$ and $\theta^{\star}=Q_P(u)$ from now on. 

Since the quantity of interest is the minimizer of a function defined by an expectation, in \eqref{eq:quantile}, it makes sense to define the loss $\ell_\theta$ as that function inside the expectation.  Precisely, we take $\ell_\theta(x) = \Phi_r(u,x-\theta)$, where, again, $u$ and $r$ are fixed.  Then the risk is $R(\theta) = P \ell_\theta$, minimized at $\theta^\star$, and the empirical risk is 
\begin{equation}
\label{eq:erisk}
    R_n(\theta)=\PP_n \ell_\theta = \frac{1}{n}\sum_{i=1}^n\{\| X_i-\theta \|_r+\langle u, X_i-\theta \rangle\}, 
\end{equation}
minimized at $\hat\theta_n$.  Given a prior distribution $\Pi$ for $\theta$, the Gibbs posterior distribution $\Pi_n$ is defined like in \eqref{eq:gibbs}.  It follows from H\"older's inequality that $R_n(\theta) \geq 0$, so if the prior is proper, then the denominator in \eqref{eq:gibbs} is finite and the Gibbs posterior is well-defined.  Therefore, $\Pi_n$ is just an ordinary probability distribution, and features of that distribution can be extracted and summarized via the standard Markov chain Monte Carlo methods.  For the theoretical analysis that follows, we assume that the learning rate $\omega$ is a fixed constant, but we will recommend a data-driving choice of $\omega$ in Section~\ref{SS:learning}.  

While the definition of the Gibbs posterior in \eqref{eq:gibbs} appears to be only a modest generalization of the familiar Bayesian definition, there are some important differences that deserve emphasis. First, in this multivariate quantile setting, $\theta$ is not a parameter indexing a statistical model, so {\em there is no likelihood function for $\theta$} and, consequently, no direct Bayesian posterior distribution for $\theta$.  Therefore, any ordinary Bayesian approach to this problem would necessarily be indirect, i.e., define a statistical model with parameter, say, $\psi$, introduce a prior distribution for $\psi$, evaluate the corresponding posterior, and then marginalize to $\theta$, which is now a function of $\psi$.  So, despite the superficial similarity between the Gibbs and Bayesian posterior distributions, they are in fact quite different.  Second, along similar lines, since $\theta$ is a real-world quantity that exists independently of a statistical model, there might be genuine prior information available about it.  That the Gibbs posterior is direct implies that this prior information can be readily incorporated in \eqref{eq:gibbs}.  Compare this to an indirect Bayesian posterior distribution where the model parameter $\psi$ often has limited real-world interpretation and, therefore, requires a non-informative prior, making it impossible to incorporate whatever prior information about $\theta$ might be available in a given application.


\subsection{Asymptotic properties}
\label{SS:theory}

First, we investigate the Gibbs posterior concentration rate, i.e., the radius of the smallest ball around $\theta^\star$ to which the posterior asymptotically assigns all of its mass, as $n \to \infty$.  For this, we require a mild condition on the underlying distribution $P$.  

\begin{assumption}
\label{asmp:density1}
$P$ admits a density $p$ that is continuous and bounded away from 0 on a compact set $\XX \subset \RR^d$ containing $\theta^{\star}$ and having non-empty interior.
\end{assumption}

\begin{assumption}
\label{asmp:density2}
The density $p$ is bounded away from $\infty$ on compact subsets of $\RR^d$.  
\end{assumption}

An important consequence (see the proof of Lemma~\ref{lem:separation} in the Appendix) of Assumptions~\ref{asmp:density1}--\ref{asmp:density2} is that the function $R$ is twice differentiable at $\theta^\star$, where $\dot R(\theta^\star) = 0$ and $V_{\theta^\star} := \ddot R(\theta^\star)$ is positive definite; here, dot and double-dot correspond to first and second derivatives with respect to $\theta$, the gradient vector and the Hessian matrix, respectively.  

\begin{assumption}
\label{asmp:prior}
The prior distribution $\Pi$ has a density $\pi$ which is continuous and bounded away from 0 in a neighborhood of $\theta^{\star}$.
\end{assumption}

\begin{theorem}
\label{thm:rate}
Under Assumptions~\ref{asmp:density1}--\ref{asmp:prior}, for any $\omega > 0$, the Gibbs posterior $\Pi_n$ for the multivariate quantile satisfies 
\[ P^n \Pi_n(\{\theta \in \RR^d: \|\theta - \theta^\star\|_2 > a_n n^{-1/2}\}) = o(1), \quad n \to \infty, \]
where $a_n \to \infty$ is any diverging sequence.
\end{theorem}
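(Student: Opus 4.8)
The plan is to bound the Gibbs posterior mass of the ``bad'' set $U_n = \{\theta : \|\theta-\theta^\star\|_2 > a_n n^{-1/2}\}$ by controlling the numerator and denominator of the ratio in \eqref{eq:gibbs} separately, after cancelling the common factor $e^{-\omega n R_n(\theta^\star)}$. Writing $\Delta_n(\theta) = R_n(\theta) - R_n(\theta^\star) = \{R(\theta)-R(\theta^\star)\} + e_n(\theta)$ with $e_n(\theta) = (\PP_n - P)(\ell_\theta - \ell_{\theta^\star})$, the posterior mass $\Pi_n(U_n)$ equals $\int_{U_n} e^{-\omega n \Delta_n(\theta)}\,\Pi(d\theta)$ divided by $\int_{\RR^d} e^{-\omega n \Delta_n(\theta)}\,\Pi(d\theta)$. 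Since $\Pi_n(U_n) \in [0,1]$, it suffices to exhibit an event $A_n$ with $P^n(A_n^c) \to 0$ on which this ratio is bounded by a deterministic sequence tending to $0$; then $P^n \Pi_n(U_n) \le P^n[\Pi_n(U_n)\mathbf{1}_{A_n}] + P^n(A_n^c) \to 0$.

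Two deterministic ingredients, which I would extract from Lemma~\ref{lem:separation}, drive the argument. First, a curvature/separation bound for the population risk: convexity of $\theta \mapsto \ell_\theta(x)$ together with positive-definiteness of $V_{\theta^\star} = \ddot R(\theta^\star)$ gives constants $c_1, c_2, \delta_0 > 0$ such that $R(\theta)-R(\theta^\star) \ge c_1\|\theta-\theta^\star\|_2^2$ for $\|\theta-\theta^\star\|_2 \le \delta_0$ and $R(\theta)-R(\theta^\star) \ge c_2\|\theta-\theta^\star\|_2$ for $\|\theta-\theta^\star\|_2 > \delta_0$, together with a matching local quadratic upper bound near $\theta^\star$. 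Second, a uniform Lipschitz property of the loss increments: the reverse triangle inequality for $\|\cdot\|_r$ and the constraint $\|u\|_q < 1$ yield $|\ell_\theta(x)-\ell_{\theta^\star}(x)| \le C\|\theta-\theta^\star\|_2$ for every $x$, so the class $\{\ell_\theta - \ell_{\theta^\star}\}$ has a linearly growing, $x$-free envelope.

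The heart of the proof is controlling the empirical fluctuation $e_n$. Using this envelope, I would apply a localized maximal inequality to the Lipschitz parametric class $\{\ell_\theta-\ell_{\theta^\star} : \|\theta-\theta^\star\|_2 \le \delta\}$, whose oscillation over a $\delta$-ball is of order $n^{-1/2}\delta$, and combine it with a peeling argument over the dyadic shells $\{2^k n^{-1/2} < \|\theta-\theta^\star\|_2 \le 2^{k+1} n^{-1/2}\}$, $k \ge 0$, to define $A_n$ as the event on which $|e_n(\theta)| \le K n^{-1/2}\|\theta-\theta^\star\|_2$ for all $\|\theta-\theta^\star\|_2 \ge n^{-1/2}$ and $\sup_{\|\theta-\theta^\star\|_2 \le n^{-1/2}}|e_n(\theta)| \le K n^{-1}$. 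The delicate point, and the step I expect to be the main obstacle, is making this normalized bound hold uniformly over the \emph{unbounded} parameter space $\RR^d$; here the linear population growth in the far region ($\|\theta-\theta^\star\|_2 > \delta_0$) is what keeps the shell-wise tail probabilities summable, so that the union over $k$ still has probability tending to one.

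On $A_n$, the population separation transfers to the empirical excess risk: writing $r = \|\theta-\theta^\star\|_2$, one gets $\Delta_n(\theta) \ge c_1 r^2 - K n^{-1/2} r \ge \tfrac12 c_1 r^2$ for $a_n n^{-1/2} < r \le \delta_0$ (valid once $a_n$ exceeds $2K/c_1$), and $\Delta_n(\theta) \ge \tfrac12 c_2 r$ for $r > \delta_0$, while on the central ball $\{r \le n^{-1/2}\}$ one has $n\,\Delta_n(\theta) = O(1)$. Crucially these excess-risk bounds are free of $\omega$, which is why the conclusion holds for every $\omega > 0$. Bounding the denominator below by the integral over $\{r \le n^{-1/2}\}$ gives, via Assumption~\ref{asmp:prior}, a lower bound of order $n^{-d/2}$. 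For the numerator I would \emph{integrate} the Gaussian-type bound rather than crudely maximize: the substitution $s = n^{1/2}(\theta-\theta^\star)$ turns $\int_{a_n n^{-1/2} < r \le \delta_0} e^{-\tfrac12 \omega c_1 n r^2}\,\pi(\theta)\,d\theta$ into $n^{-d/2}$ times the Gaussian tail integral $\int_{\|s\|_2 > a_n} e^{-\tfrac12\omega c_1\|s\|_2^2}\,ds$, which decays like $e^{-c a_n^2}$, while the far region contributes at most $e^{-c n \delta_0}$. The two factors of $n^{-d/2}$ cancel in the ratio, leaving a deterministic bound of order $e^{-c a_n^2} + n^{d/2} e^{-c n\delta_0} \to 0$ for every diverging $a_n$, which completes the argument.
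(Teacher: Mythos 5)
Your overall architecture---cancel $e^{-\omega n R_n(\theta^\star)}$, lower-bound the denominator by the prior mass $\asymp n^{-d/2}$ of an $n^{-1/2}$-ball on which $n\Delta_n(\theta)=O(1)$, and upper-bound the numerator by integrating a Gaussian-type bound over shells---is the same as the paper's (Lemmas~\ref{lem:separation}, \ref{lem:den}, \ref{lem:num} and the assembly in Appendix~\ref{proof:rate}). The genuine gap is where you yourself flag the ``main obstacle'': uniformity of the bound $|e_n(\theta)| \le K n^{-1/2}\|\theta-\theta^\star\|_2$ over the \emph{unbounded} parameter space. Your claim that ``the linear population growth in the far region is what keeps the shell-wise tail probabilities summable'' conflates two unrelated things: the growth of the population risk $R$ controls the size of the deviation you can afford, but the tail probability of the empirical fluctuation on shell $k$ is governed by the envelope and variance of the class $\{\ell_\theta-\ell_{\theta^\star} : \|\theta-\theta^\star\|_2 \approx r_k\}$, which are of order $r_k$ and $r_k^2$ respectively (Assumptions~\ref{asmp:density1}--\ref{asmp:density2} impose no moment conditions on $P$, so nothing better than $P(\ell_\theta-\ell_{\theta^\star})^2 \le C^2 r_k^2$ is available). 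Plugging the deviation you need in the far region, $t_k \asymp \sqrt{n}\, r_k$, into Bernstein or Talagrand gives a per-shell failure probability of order $e^{-cn}$ with $c$ \emph{independent of $k$}; summed over the infinitely many shells covering $\RR^d$ this is not $o(1)$, and you cannot discard distant shells via prior mass either, since without control of $e_n$ there the integrand can be as large as $e^{\omega n C r}$ while Assumption~\ref{asmp:prior} says nothing about prior tails. This is exactly why the paper never runs an empirical-process argument outside a compact set: it kills $\XX^c$ by a different mechanism---convexity of $\theta \mapsto R_n(\theta)$ plus the Hjort--Pollard lemma (Lemma~\ref{lem:consistency}), which upgrades uniform control on a single compact sphere to a linear lower bound on all of $\XX^c$---and restricts the Donsker/peeling arguments (Lemmas~\ref{lem:donsker} and \ref{lem:num}) to the compact $\XX$. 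To repair your proof you need either this convexity step, or a proof that the normalized class $\{(\ell_\theta-\ell_{\theta^\star})/\|\theta-\theta^\star\|_2 : \theta \in \RR^d\}$ is Donsker over the whole space; neither appears in your write-up.

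A second, smaller problem is that your constant $K$ in the definition of $A_n$ is fixed. Even on a single shell, the sharpest available bound for $P\{\sup |\GG_n(\ell_\theta-\ell_{\theta^\star})| > K r_k\}$ is a constant of order $e^{-cK^2}$ that does not vanish as $n \to \infty$, so $P^n(A_n^c) \not\to 0$ for any fixed $K$; adding the union over the $O(\log n)$ shells inside $\{r \le \delta_0\}$ only makes this worse. The theorem demands $o(1)$, so you must take $K = K_n \to \infty$, and then---because $a_n$ is an \emph{arbitrary} divergent sequence---you must check that $K_n$ can be chosen diverging yet with $K_n = o(a_n)$, so that your condition ``$a_n \ge 2K/c_1$'' eventually holds and the factor $e^{\omega K_n}$ entering the denominator bound is dominated by $e^{-c a_n^2}$ from the numerator. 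This is precisely the role of the auxiliary sequences $b_n$ (with $b_n = o(a_n)$) and $c_n$ (with $c_n = o(a_n^2)$) in Lemmas~\ref{lem:donsker}--\ref{lem:num} and the final step of the paper's proof. This second issue is fixable bookkeeping; the unbounded-region control above is a missing idea.
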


\begin{proof}
See Appendix~\ref{proof:rate}.
\end{proof}

The concentration rate result in Theorem~\ref{thm:rate} holds for all $\omega > 0$, that is, there are no restrictions on $\omega$ needed to achieve the target root-$n$ rate.  However, the limiting Gibbs posterior distribution shape in Theorem~\ref{thm:bvm} below and, hence, the practical performance of the Gibbs posterior distribution, does depend on $\omega$, so we recommend a data-driven choice as described in Section~\ref{SS:learning}.


Next, we will prove a Bernstein--von Mises theorem for the Gibbs posterior, that is, the Gibbs posterior can be approximated by a Gaussian distribution in a total variation sense as $n \to \infty$.  
Before formally stating this result, we need a bit more notation.  The loss $\theta \mapsto \ell_\theta(x)$ can be differentiated for $P$-almost all $x$, and the $j^\text{th}$ component of the gradient vector, $\dot\ell_\theta(x)$, is given by
\[ \dot\ell_\theta(x)_j = \frac{\vert x_j-\theta_j \vert ^{r-1}}{\|x-\theta\|_r^{r-1}}\mathrm{sign}(\theta_j-x_j) - u_j, \quad j=1,\dots,d, \]
where, again, $r$ and $u$ are fixed, and $\mathrm{sign}(\cdot)$ denotes the {\em signum} function.  Now set $\Delta_{n,\theta^\star} = n^{-1/2} \sum_{i=1}^n V_{\theta^\star}^{-1} \dot\ell_{\theta^\star}(X_i)$.  

\begin{theorem}
\label{thm:bvm}
Under Assumptions~\ref{asmp:density1}--\ref{asmp:prior}, the sequence of centered and scaled Gibbs posteriors, with any learning rate $\omega > 0$, approaches a sequence of $d$-variate normal distributions in total variation, that is,  
\[ \sup_B\bigl| \Pi_n(\{\theta: n^{1/2}(\theta-\theta^{\star}) \in B\})-\nm_d(B \mid \omega\Delta_{n,\theta^{\star}},{(\omega V_{\theta^{\star}})}^{-1})\bigr| = o_P(1), \quad n \to \infty. \]
\end{theorem}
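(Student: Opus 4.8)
The plan is to follow the standard LAN-type (local asymptotic normality) route to a Bernstein–von Mises result, adapted to the Gibbs posterior with its learning rate $\omega$. First I would change variables to the local parameter $h = n^{1/2}(\theta - \theta^\star)$, so that the Gibbs posterior on $\theta$ is pushed forward to a posterior on $h$. Under this reparametrization the exponent becomes $-\omega n R_n(\theta^\star + n^{-1/2} h)$, and the goal is to show that, up to an $h$-independent normalizing constant, this density is asymptotically proportional to the $\nm_d(\,\cdot \mid \omega \Delta_{n,\theta^\star}, (\omega V_{\theta^\star})^{-1})$ density. The key analytic input is a quadratic (Taylor) expansion of the empirical risk around $\theta^\star$:
\[
n\{R_n(\theta^\star + n^{-1/2}h) - R_n(\theta^\star)\} = -h^\top \Bigl(n^{-1/2}\sum_{i=1}^n \dot\ell_{\theta^\star}(X_i)\Bigr) + \tfrac{1}{2} h^\top V_{\theta^\star} h + r_n(h),
\]
where the linear term, after premultiplying by $V_{\theta^\star}^{-1}$, produces the centering $\Delta_{n,\theta^\star}$, the quadratic term supplies the Hessian $V_{\theta^\star}$ (positive definite by Assumptions~\ref{asmp:density1}--\ref{asmp:density2}), and $r_n(h)$ is a remainder that I must control. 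Completing the square in $h$ then reveals the Gaussian shape with mean $\omega \Delta_{n,\theta^\star}$ and covariance $(\omega V_{\theta^\star})^{-1}$.

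The steps I would carry out, in order, are as follows. Step one: establish the stochastic expansion above, treating $R_n$ as an empirical process; since $\ell_\theta$ is a.e.\ differentiable with gradient $\dot\ell_\theta$ but the norm $\|\cdot\|_r$ is only Lipschitz (nonsmooth at the origin), I would verify the expansion via the smoothness of the \emph{population} risk $R$ at $\theta^\star$ (guaranteed to be twice differentiable with $\dot R(\theta^\star)=0$ per the remark following Assumption~\ref{asmp:density2}) together with an empirical-process stochastic equicontinuity argument to handle $\PP_n - P$ applied to the increments. Step two: use Theorem~\ref{thm:rate} to restrict attention to a shrinking neighborhood $\|\theta - \theta^\star\|_2 \le a_n n^{-1/2}$, equivalently $\|h\|_2 \le a_n$, so that the remainder $r_n(h)$ is uniformly negligible on the relevant region and the prior density $\pi$, being continuous and bounded away from $0$ near $\theta^\star$ (Assumption~\ref{asmp:prior}), is asymptotically constant there and therefore cancels from numerator and denominator. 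Step three: show that the Gibbs posterior assigns vanishing mass outside $\|h\|_2 \le a_n$ (again by Theorem~\ref{thm:rate}), so the total-variation distance is unaffected by the tails. Step four: combine the pointwise convergence of the local density with a domination/uniform-integrability argument—a Scheffé-type lemma—to upgrade pointwise convergence of densities to convergence in total variation, which is exactly the supremum-over-$B$ statement to be proved.

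The main obstacle I anticipate is controlling the remainder $r_n(h)$ uniformly over the expanding ball $\|h\|_2 \le a_n$ in the presence of the nonsmooth loss. Because $\dot\ell_\theta$ is bounded but discontinuous (the signum factor and the ratio of powers of $\ell_r$-norms behave irregularly near coordinate hyperplanes and near $x=\theta$), a naive second-order Taylor expansion of $R_n$ is unavailable; instead I must route the quadratic term through the smooth population risk $R$ and argue that $n\{(\PP_n - P)(\ell_{\theta^\star + n^{-1/2}h} - \ell_{\theta^\star})\}$ contributes only the linear (score) term plus an $o_P(1)$ error, uniformly in $h$ over the ball. This requires a stochastic-equicontinuity bound for the empirical process indexed by the centered-and-scaled loss differences—plausibly via a Donsker or bracketing-entropy argument for the class $\{\ell_\theta : \theta \in \XX\}$, whose Lipschitz continuity in $\theta$ (uniformly in $x$, away from the measure-zero nonsmooth set) makes such an entropy bound attainable. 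Managing the interplay between the diverging radius $a_n$ and the remainder's rate of decay is where the technical care will be concentrated; everything after that expansion is the routine completing-the-square and Scheffé machinery.
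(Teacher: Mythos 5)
Your proposal is correct and follows essentially the same route as the paper's proof: a LAN expansion of $n\{R_n(\theta^\star + n^{-1/2}h) - R_n(\theta^\star)\}$ obtained by routing the quadratic term through the twice-differentiable population risk and the linear (score) term through a stochastic-equicontinuity argument for the Lipschitz loss increments (the paper invokes Lemma~19.31 of van der Vaart for exactly this), then combining with the root-$n$ concentration of Theorem~\ref{thm:rate} and upgrading to total-variation convergence, a step the paper delegates to the proof of Theorem~2.1 of Kleijn and van der Vaart rather than redoing the compact-sets/Scheff\'e machinery by hand as you propose. One bookkeeping caution: the sign of your linear term is flipped --- the expansion gives $+h^\top n^{-1/2}\sum_{i=1}^n \dot\ell_{\theta^\star}(X_i)$, so completing the square centers the Gaussian at $-\Delta_{n,\theta^\star}$ with covariance $(\omega V_{\theta^\star})^{-1}$ (the $\omega$ cancels from the mean, consistent with the paper's remark that $\Pi_n$ is approximately $\nm_d(\hat\theta_n, (\omega n V_{\theta^\star})^{-1})$, since $n^{1/2}(\hat\theta_n-\theta^\star) = -\Delta_{n,\theta^\star}+o_P(1)$); this is an algebra slip rather than a flaw in the approach, and the paper's own displays contain the same sign/scale inconsistencies.
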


\begin{proof}
See Appendix~\ref{proof:bvm}.
\end{proof}

A few technical remarks about the theorem and its proof are in order.
\begin{itemize}
\item The proof proceeds by checking the available sufficient conditions for Bernstein--von Mises theorems, e.g., in \citet{chernozhukov2003mcmc}, \citet{kleijn2012bernstein}, etc.  Here we opt to follow the latter reference whose results are more flexible and easier to apply in other similar applications.
\vspace{-2mm}
\item A similar result would hold in examples other than multivariate quantiles.  The critical condition is that the empirical risk $R_n$ satisfies a version of the {\em local asymptotic normality} condition, i.e., for every compact set $K \subset \RR^d$,
\begin{equation}
\label{eq:lan}
\sup_{h \in K} \Bigl| n\{R_n(\theta^{\star}+h n^{-1/2}) - R_n(\theta^{\star})\}-  h^\top V_{\theta^{\star}} \Delta_{n,\theta^{\star}}-\tfrac{1}{2}h^\top V_{\theta^{\star}}h \Bigr| = o_P(1).
\end{equation}
So we can expect similar conclusions in any other problem for which \eqref{eq:lan} holds.  
\vspace{-2mm}
\item Finally, like in \citet[][p.~144]{van2000asymptotic}, the normal approximation can be centered about an estimator that is asymptotically equivalent to $\Delta_{n,\theta^\star}$.  In particular, using the location shift invariance of the total variation distance, it follows from Theorem~\ref{thm:bvm} that the Gibbs posterior $\Pi_n$ is approximately $\nm_d(\hat\theta_n, (\omega n V_{\theta^\star})^{-1})$.  
\end{itemize}

In a Bayesian setting, with a regular, well-specified model, a Bernstein--von Mises theorem ensures that inferences derived from the Bayesian posterior distribution are valid in a frequentist sense.  For example, a $100(1-\alpha)$\% posterior credible set will have frequentist coverage probability approximately equal to $1-\alpha$ for large $n$.  The reason for this Bayesian--frequentist connection is that the posterior distribution centers around, in that case, the maximum likelihood estimator, and the covariance matrix in the Bernstein--von Mises theorem is the inverse Fisher information matrix, which agrees with the asymptotic covariance matrix of the maximum likelihood estimator.  However, when the model is misspecified, like in \citet{kleijn2012bernstein}, or, like here, where no model is specified at all, then this covariance matching is not guaranteed.  Indeed, in our present case, the covariance matrix in the normal approximation to the Gibbs posterior is ${(\omega V_{\theta^\star})}^{-1}$ whereas the asymptotic covariance matrix of $n^{1/2}(\hat\theta_n-\theta^\star)$ is 
\begin{equation}
\label{eq:Gamma}
\Gamma = V_{\theta^{\star}}^{-1}P(\dot{\ell}_{\theta^{\star}}\dot{\ell}_{\theta^{\star}}^\top)V_{\theta^{\star}}^{-1}, 
\end{equation}
which comes from the familiar sandwich formula.  Since these two matrices are generally different, our Bernstein--von Mises theorem does not guarantee that inference drawn from the Gibbs posterior are valid in a frequentist sense.  One way to avoid this covariance mismatch is 
to replace the empirical risk $R_n$ in \eqref{eq:gibbs} with, e.g., a quadratic form like
\[ \theta \mapsto \dot R_n(\theta)^\top \{\PP_n(\dot{\ell}_{\theta}\dot{\ell}_{\theta}^\top)\}^{-1} \dot R_n(\theta), \]
with the covariance mismatch correction term squeezed in.  Another, following a suggestion in \citet{yang2016posterior}, is to directly define a data-dependent distribution 
\begin{equation}
\label{eq:dumb.normal}
\widetilde \Pi_n(A) = \nm_d(A \mid \hat\theta_n, n^{-1} \widehat\Gamma), 
\end{equation}
where $\widehat\Gamma$ is a suitable estimator of $\Gamma$ as in \eqref{eq:Gamma}.  Both of these ``pseudo-posterior distributions'' have their merits, but they lack the interpretation of being proper generalized Bayes posteriors, which is our focus in this paper.

If we were in a traditional Bayesian setting and were unfortunate enough that our model was sufficiently misspecified that we get the aforementioned covariance matrix mismatch, then (a)~we typically would not be aware of this problem and (b)~there would be nothing we could do about it, aside from starting over with a different model.  However, since we are working within a Gibbs framework, we are aware of and openly acknowledge that our posterior distribution is based on an effectively misspecified model and, moreover, we have a potential remedy: adjusting the learning rate. 

It is easy to see that if $P(\dot{\ell}_{\theta^{\star}}\dot{\ell}_{\theta^{\star}}^\top) \propto V_{\theta^{\star}}$, i.e., if the {\em generalized information equality} \citep{chernozhukov2003mcmc} holds, then the covariance matrix in the normal approximation to the Gibbs posterior will be proportional to the asymptotic covariance matrix of the M-estimator $\hat{\theta}_n$.  In that case, we can exactly correct for the covariance mismatch simply by tuning the learning rate.  In general, however, simply tuning the scalar learning rate parameter cannot fully correct for the covariance mismatch, but it is still possible to find a learning rate such that credible sets derived from the Gibbs posterior have approximately the nominal frequentist coverage probability; see Section~\ref{SS:learning}.

\subsection{Choice of the learning rate}
\label{SS:learning}

As we indicated in Section~\ref{S:background}, the choice of learning rate is critical to the performance of methods derived from a Gibbs posterior distribution.  This is especially important in our present situation because, as mentioned in the remarks following Theorem~\ref{thm:bvm}, the Gibbs posterior does not inherit the correct asymptotic shape.  This covariance mismatch is a common occurrence when a Bayesian model is misspecified but, unlike the traditional Bayesian setting where nothing can be done to overcome the misspecification bias, the Gibbs posterior has a learning rate that can be suitably chosen to correct for the mismatched asymptotic covariance matrix.  

More specifically, following \citet{syring2018calibrating}, if $\Pi_n^\omega$ denotes the Gibbs posterior with learning rate $\omega$, then we aim to choose $\omega$ such that the frequentist coverage probabilities of credible sets derived from $\Pi_n^\omega$ are approximately equal to the nominal level.  That is, for a desired significance level $\alpha \in (0,1)$, if $C_{\omega,\alpha}(X^n)$ denotes a $100(1-\alpha)$\% credible set from the Gibbs posterior $\Pi_n^\omega$, then the coverage probability is
\[ c_{\alpha}(\omega;P)=P\{C_{\omega,\alpha}(X^n)\ni \theta(P)\}, \]
i.e., the $P$-probability that $C_{\omega,\alpha}(X^n)$ contains $\theta(P)$.  Of course, if $P$ were known, then it would be possible to approximate the coverage probability using Monte Carlo and solve the equation, $c_{\alpha}(\omega;P)=1-\alpha$, using stochastic approximation \citep[e.g.,][]{robbinsmonro}.  Since $P$ is unknown in practice, \citet{syring2018calibrating} recommend a bootstrap version that replaces $P$ with the empirical distribution, $\PP_n$.  We use their {\em Gibbs posterior calibration} algorithm (see Algorithm~\ref{algo:gpc}) for choosing the learning rate, which performs well in our experiments below.

\begin{algorithm*}[t]
Fix a convergence tolerance $\epsilon >0$ and an initial value $\omega^{(0)}$ of the learning rate $\omega$. Take $B$ bootstrap samples $\tilde{X}_1^n,\ldots, \tilde{X}_B^n$ of size $n$. Set $t=0$ and do the following.
\begin{enumerate}
\item Construct $100(1-\alpha)\%$ credible set $C_{\omega^{(t)},\alpha}(\tilde{X}_b^n)$ for every $b=1,\dots,B$.
\item Evaluate the bootstrap estimate 
\[ \hat c_\alpha(\omega^{(t)}, \PP_n) = \frac{1}{B} \sum_{b=1}^B \one \bigl\{ C_{\omega^{(t)},\alpha}(\tilde X_b^n) \ni \hat\theta_n \bigr\} \]
of the empirical coverage probability $c_{\alpha}(\omega^{(t)},\PP_n)$.
\item If $\vert \hat{c}_{\alpha}(\omega^{(t)},\PP_n)-(1-\alpha) \vert < \epsilon$, then return $\omega^{(t)}$ as the output, else update $\omega ^{(t)}$ to $\omega^{(t+1)}$ as
\[ \omega^{(t+1)}=\omega^{(t)}+\kappa_t\{\hat{c}_{\alpha}(\omega^{(t)},\PP_n)-(1-\alpha)\}, \]
with $\kappa_t=(t+1)^{-0.51}$, set $t \leftarrow t+1$, and go back to Step 1.
\end{enumerate}
\caption{--- Gibbs Posterior Calibration \citep{syring2018calibrating}}
\label{algo:gpc}
\end{algorithm*}

For a quick visual illustration, consider a bivariate case, $d=2$.  Suppose we have $n=100$ samples from a bivariate normal distribution as in Example~1 in Section~\ref{SS:sim}.  Using a relatively flat $\nm_2(0, 10I_2)$ prior, and with the learning rate chosen according to Algorithm~\ref{algo:gpc}, samples from the corresponding Gibbs posterior distribution are shown in Figure~\ref{fig:illustration}(a).  The same is shown in Figure~\ref{fig:illustration}(b), except where the data are sampled from a bivariate Laplace distribution as in Example~2 of Section~\ref{SS:sim}.  In addition to the Gibbs posterior samples, we also display the 95\% credible region based on the normal approximation.  That is, we first compute the posterior mean $\bar\theta$ and covariance matrix $S$ based on the Monte Carlo samples, and then find the 95th percentile of the marginal posterior distribution for $\vartheta \mapsto (\vartheta - \bar\theta)^\top S^{-1} (\vartheta - \bar\theta)$, denoted by $r_{0.95}$.  Then the 95\% Gibbs posterior credible set is 
\[ \{\vartheta: (\vartheta - \bar\theta)^\top S^{-1} (\vartheta - \bar\theta) \leq r_{0.95}\}. \]
Similarly, we compute the 95\% confidence ellipse based on the asymptotic normality of the M-estimator/spatial median or, equivalently, the 95\% credible set from \eqref{eq:dumb.normal}, namely, 
\[ \{\vartheta: (\vartheta - \hat\theta_n)^\top \widehat\Gamma_n 
(\vartheta -\hat\theta_n) \leq \chi_{2;0.95}^2\}, \]
where $\chi^2_{2;.95}$ is the 95th percentile of the chi-square distribution with 2 degrees of freedom.  The boundaries of these two ellipses are overlaid on the plots of the Gibbs posterior samples.  Clearly, in both cases, the contours of the Gibbs posterior are not of the same shape as the M-estimator confidence ellipse, a consequence of the covariance mismatch.  However, by choosing the learning rate according to Algorithm~\ref{algo:gpc}, which is aiming to achieve the nominal 95\% frequentist coverage rate, the Gibbs posterior credible ellipse is stretched to roughly match the confidence ellipse in the direction in which it is widest.  And since the confidence ellipse achieves the nominal frequentist coverage probability, at least asymptotically, the Gibbs posterior credible ellipse will too.  Of course, there is some loss of efficiency due to the covariance mismatch---which is the price one pays for a model-free posterior distribution---but, as the simulation results in Section~\ref{SS:sim} show, this loss of efficiency is not severe.  In fact, in some cases, the Gibbs posterior credible regions are more efficient than those of other Bayesian methods.  


\begin{figure}[t]
\centering     
\subfigure[Bivariate normal]{\scalebox{0.75}{\includegraphics{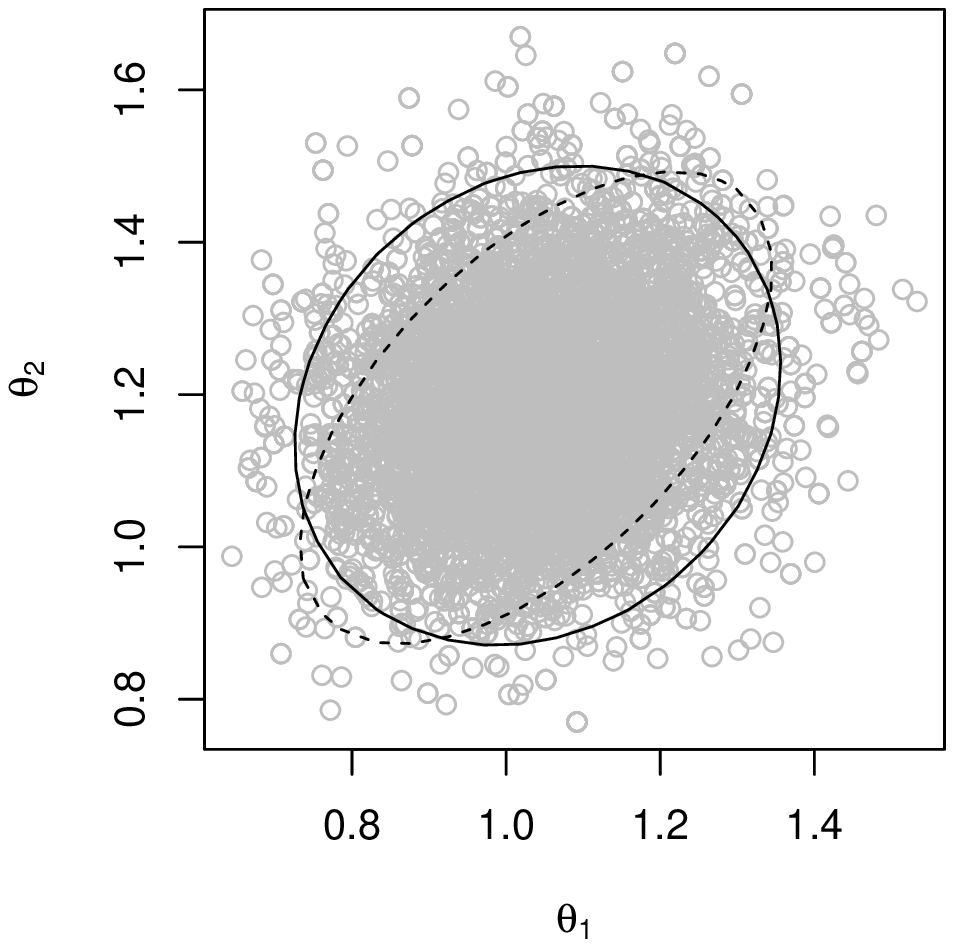}}}
\subfigure[Bivariate Laplace]{\scalebox{0.75}{\includegraphics{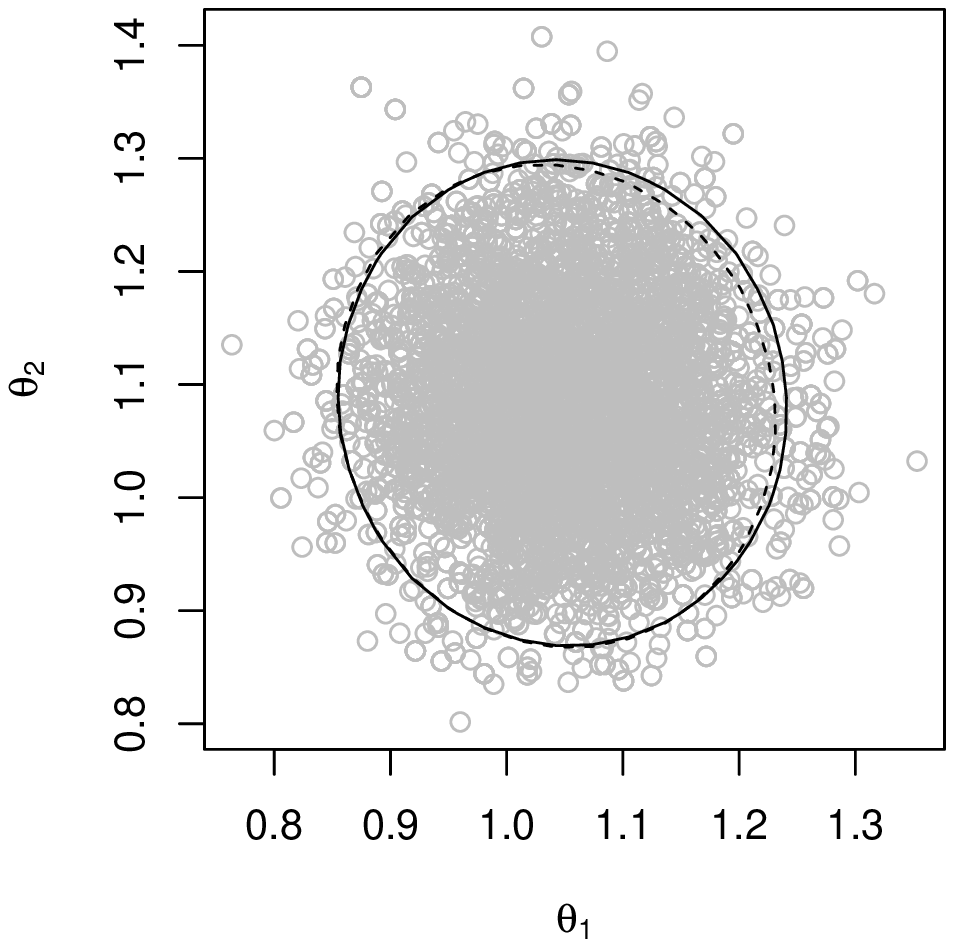}}}
\caption{Gibbs posterior samples (gray), with learning rate chosen according to Algorithm~\ref{algo:gpc}, along with the 95\% posterior credible region (solid) and the corresponding M-estimator confidence region (dashed).}
\label{fig:illustration}
\end{figure}

\section{Numerical results}

\subsection{Simulation study}
\label{SS:sim}

In this section, we illustrate the finite sample performance of the Gibbs posterior of bivariate $\ell_1$-medians and quantiles, i.e., for $d=2$. For the median, we would like to compare the Gibbs posterior's performance to that of both parametric and non-parametric Bayesian methods in situations when the components of the vector are correlated, and when the data has outliers. Aside from the $d$-variate normal distribution $\nm_d(\mu, \Sigma)$ with mean vector $\mu$ and covariance matrix $\Sigma$, we also consider a $d$-variate Laplace distribution, denoted by $\lap_d(\mu,\Sigma)$, with location vector $\mu$ and dispersion matrix $\Sigma$, with density for the standardized version, with $\mu=0$ and $\Sigma=I_d$, 
\[ f(x) \propto \|x\|_2^{-(d-1)/2} e^{-2^{3/2} \|x\|_2}, \quad x \in \RR^d. \]
Also, $\gam_d(s,r,V)$ denotes a $d$-variate gamma distribution with shape $s$, rate $r$, and correlation matrix $V$, constructed using a Gaussian copula \citep{xue2000multivariate}.  The three specific examples we consider are as follows, each with sample size $n=100$.  

\begin{example}
$P = \nm_2(\mu,\Sigma)$, where $\mu=(1,1)^\top$, $\Sigma_{11}=\Sigma_{22}=1$ and $\Sigma_{12}=0.7$.  
\end{example}

\begin{example}
$P = \lap_2(\mu, \Sigma)$, where $\mu=(1,1)^\top$ and $\Sigma=I_2$. 
\end{example}

\begin{example}
$P = \gam_2(1, 1, V)$, where $V_{11}=V_{22}=1$ and $V_{12}=0.5$.
\end{example}

In each case, for the Gibbs posterior, we consider a bivariate normal prior for $\theta$, namely, $\nm_2((0,0)^\top,10 I_2)$.  We opt for a relatively non-informative prior, sacrificing one of the key benefits of the Gibbs posterior (Section~\ref{SS:def}) so that the performance of our Gibbs posterior is not confounded with the use of an informative prior. We also use a Metropolis--Hastings algorithm with transition kernel $Q(y \mid x)= \nm_2(y \mid x, 0.01 I_2)$ for drawing samples from the posterior distribution. The relevant summaries would be size and frequentist coverage of the 95\% credible ellipses. 

We compare the Gibbs posterior performance to that of parametric and non-parametric Bayesian methods. For the parametric Bayes model, we consider 
\[ (X_1,\ldots,X_n) \mid \theta \iid \nm_2(\theta, \sigma^2 I_2) \quad \text{and} \quad \theta \sim \nm_2((0,0)^\top, 10 I_2) \quad \sigma^{-2} \sim \gam(1,1). \]
This is quite simple, and we can use a Gibbs sampler for posterior inference.  While the model and corresponding analysis is simple, the concern is potential model misspecification bias.  
As a more robust alternative, one can consider a nonparametric Bayesian formulation.  As suggested by \citet{bhattacharya2019bayesian}, assume 
\[ (X_1,\ldots,X_n) \mid P \iid P \quad \text{and} \quad P \sim \dpp(\alpha), \]
where $\dpp(\alpha)$ denotes a Dirichlet process distribution with base or centering measure $\alpha$ \citep[e.g.,][Ch.~4]{ghosal2017fundamentals}.  Here we choose $\alpha = 2 \times \nm_2((0,0)^\top, I_2)$.  It is well known that the Dirichlet process prior is conjugate, so the posterior distribution for $P$, given $X^n$, is also a Dirichlet process, which is relatively simple to work with.  However, the quantity of interest is $\theta=\theta(P)$, a functional of $P$, so some non-trivial marginalization is required.  Specifically, we sample $P$ from the Dirichlet process posterior distribution, and then evaluate $\theta$ as the minimizer of $\xi \mapsto P\|X-\xi\|_r$. \citet{bhattacharya2019bayesian} establish that this nonparametric Bayes marginal posterior for $\theta$ is asymptotically equivalent to the Gaussian distribution in \eqref{eq:dumb.normal}, i.e., no covariance mismatch, so must be relatively close to the best possible frequentist solution.   

%

To compare the four methods described above, we consider a measure of size and also the frequentist coverage probability of the 95\% posterior credible sets.  Suppose that we have samples $\theta_1,\ldots,\theta_M$ from any one of the three posterior distributions, where $M=5000$ is the Monte Carlo sample size.  The 95\% credible set is given by 
\[ \{\vartheta: (\vartheta - \bar\theta)^\top S^{-1} (\vartheta - \bar\theta) \leq r_{0.95}\}, \]
where $\bar{\theta}$ and $S$ are the Monte Carlo sample mean and covariance matrix, respectively.  The coverage probability is defined as usual and, as a measure of the credible set's size, we use $\vert S \vert r_{0.95}^d$, with $\vert S \vert$ denoting the determinant of the matrix $S$.  Table~\ref{tab:sim} summarizes the size and coverage probability over 2000 replications for each of the three examples.  It can be seen that the Gibbs posterior performs well compared to the parametric and nonparametric Bayesian approaches in terms of both size and coverage.  Indeed, the Gibbs posterior has at least the nominal 95\% coverage in every scenario, while the parametric and nonparametric Bayesian credible sets occasionally miss the target coverage, especially the parametric solution. 
The coverage performance of the Gibbs credible regions may not be surprising, given that we tuned the learning rate to achieve the nominal coverage.  However, it is interesting to see that, despite the covariance mismatch, the coverage guarantees do not come with any perceptible loss of efficiency.  In fact, in some cases, especially in Example~3, the Gibbs posterior credible sets might be more efficient.

\begin{table}[t]


\begin{center}
 \begin{tabular}{ccccc} 
 \hline
Example  & $r$ & Gibbs & PBayes & NPBayes\\ \hline 
1 & $2$ & 0.950 (0.42) & 0.900 (0.33) & 0.975 (0.37) \\ 
 & $3$ & 0.955 (0.40) & 0.900 (0.33) & 0.975 (0.38) \\ 
2 & $2$ & 0.950 (0.21) & 0.925 (0.34) &0.920 (0.19) \\ 
 & $3$ & 0.960 (0.20) & 0.925 (0.34) & 0.970 (0.21) \\ 
3 & $2$ & 0.950 (0.31) & 0.926 (0.35) & 0.950 (0.40) \\ 
 & $3$ & 0.965 (0.21) & 0.910 (0.35) & 0.949 (0.27) \\ 
\hline 
\end{tabular}
\caption {Estimated coverage probability and mean size (in parentheses) of $95\%$ credible ellipses of the $\ell_1$-median (with $\ell_2$ and $\ell_3$ norms) for the Gibbs, parametric Bayes, and nonparametric Bayes posterior distributions.}
\label{tab:sim} 
\end{center}
\end{table}

We also consider inference on a general $u^\text{th}$ geometric quantile other than the $\ell_1$-median, for $u=(0.2,0.3)$.  While one can imagine the $\ell_1$-median as a type of location parameter, the general geometric quantile has no such interpretation.  This implies that there is no genuine likelihood function for this unknown, which is precisely the case in which the Gibbs framework is most valuable.  Here we compare the performance of the Gibbs posterior with the nonparametric Bayesian approach described above, again in terms of coverage probability and credible region size.  The results are presented in Table~\ref{tab:sim2}.  Note here that the Gibbs posterior achieves or exceeds the target 0.95 coverage probability while being more (or at least no less) efficient than the nonparametric Bayes solution.  It is worth noting here that we have used a relatively flat prior for our illustration.  But when relevant prior information about the quantile is available, this can readily be incorporated into the Gibbs posterior because it deals directly with the quantile, which would lead to additional gains in efficiency.  The nonparametric Bayes formulation, on the other hand, focuses on the distribution $P$ and, hence, works only indirectly with $\theta=\theta(P)$, so it is not at all clear how to incorporate prior information about $\theta$ to achieve those same efficiency gains.  

\begin{table}[t]
\begin{center}
 \begin{tabular}{ccc} 
 \hline
Example & Gibbs & NPBayes\\ \hline 
1 & 0.990 (0.56) & 0.955 (0.71) \\ 
2 & 0.950 (0.43) & 0.925 (0.44) \\ 
3 & 0.950 (0.31) & 0.945 (0.41) \\
\hline 
\end{tabular}
\caption {Estimated coverage probability and mean size (in parentheses) of $95\%$ credible ellipses of $u=(0.2,0.3)$th geometric quantile (with $\ell_2$ norm) for the Gibbs and nonparametric Bayes posterior distributions.}
\label{tab:sim2} 
\end{center}
\end{table}

\subsection{Real data analysis}
\label{SS:real}

The Egyptian skulls dataset \citep{hand1994} consists of $d=4$ measurements---namely, maximal breadth, basibregmatic height, basialveolar length, and nasal height---taken on $n=150$ ancient Egyptian skulls from five time epochs; these data are available in the {\tt HSAUR} package in R. The mean effect of time was removed by fitting a linear model and extracting the residuals, and we take these 4-dimensional residual vectors as our data $X^n$.  A first thought would be to assume multivariate normality and carry out a standard analysis.  However, formal tests of normality are conflicting: marginal tests of normality reject while tests of multivariate normality do not reject \citep[e.g.,][]{dpmgof}.  So, we proceed with the construction of a Gibbs posterior that does not require us to decide about normality in this difficult case.  

For simplicity, here we will focus on the median of the 4-dimensional distribution, although other quantiles could be handled similarly.  Since these data already have time trends removed, we expect that the distribution's center should be roughly near the origin, so we take a normal prior with zero mean, but with covariance matrix $10 I_4$.  Since we are not at the $n \to \infty$ limit, we expect some non-elliptical shape in the Gibbs posterior, and the goal of this analysis is to investigate that shape.  Figure~\ref{fig:skull} summarizes the marginal and pairwise Gibbs posterior distributions after scaling the learning rate according to Algorithm~\ref{algo:gpc}.  As expected, even on these limited low-dimensional summaries, the Gibbs posterior does not appear to be exactly normal, but the results are quite reasonable.  For comparison, we also show the marginal plug-in densities and 95\% pairwise confidence ellipses based on the asymptotically normal sampling distribution of the M-estimator, the sample spatial median.  Clearly, these margins of the Gibbs posterior are centered in roughly the correct place but, most importantly, and thanks to the calibration framework in Algorithm~\ref{algo:gpc}, the Gibbs posterior spread tends to be wider in some directions than that of the M-estimator sampling distribution.  While some might view this wider spread as an indication of some inefficiency, we believe the wider spread is necessary for valid uncertainty quantification in finite samples, not just in the idealistic $n \to \infty$ case.  

\begin{figure}[t]
\begin{center}
\scalebox{0.75}{\includegraphics{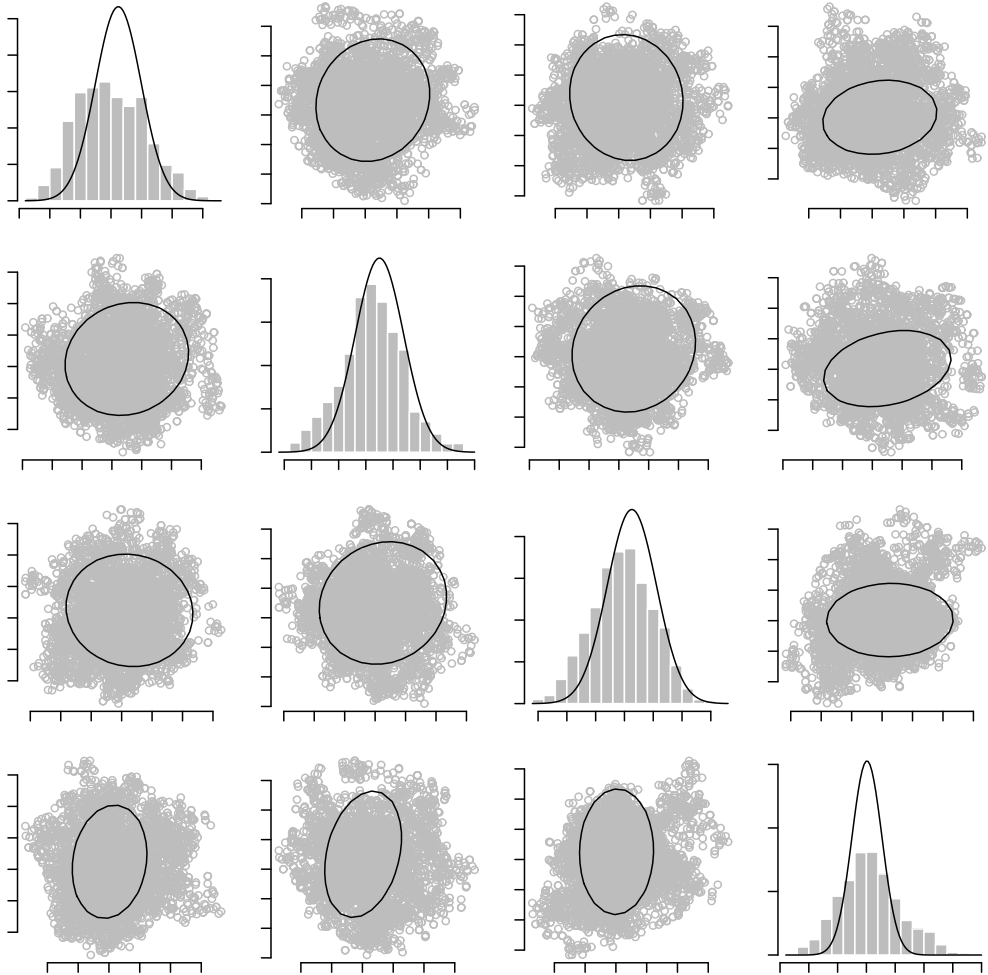}}
\end{center}
\caption{Marginal and pairwise Gibbs posterior distributions (gray) for the 4-dimensional spatial median of the Egyptian skull data described in Section~\ref{SS:real}.  Overlaid (black) are approximate marginal sampling distributions and 95\% pairwise confidence ellipses based on asymptotic normality of the M-estimator.}
\label{fig:skull}
\end{figure}

To compare the performance of our Gibbs posterior with a Bayes solution that assumes normality, we look at the posterior of the empirical risk evaluated on a held-out testing set.  That is, we split the data into a training and testing set---the first 100 samples are training and the last 50 are testing---construct a posterior distribution for the median $\theta$ using the training data, then evaluate the corresponding posterior distribution for $R_{\text{test}}(\theta)$, where $R_{\text{test}}$ is the empirical risk function in \eqref{eq:erisk} based on the testing data set only.  Figure~\ref{fig:risktest} plots (kernel density estimates of) the posterior distribution of log empirical risk difference, 
\begin{equation}
\label{eq:log.test.risk}
\log\{ R_{\text{test}}(\theta) - \textstyle \min_\vartheta R_{\text{test}}(\vartheta)\}, 
\end{equation}
based on the testing data.  We follow the same Gibbs formulation as above; for the Bayes solution, we assume $P = \nm_4(\theta, \Sigma)$ and use the conjugate normal--inverse Wishart prior for $(\theta,\Sigma)$.  The figure shows that the Gibbs posterior distribution of the log empirical risk difference is centered to left of that for Bayes, which is an indication that the former is more concentrated around $\theta$ values that make the out-of-sample risk small than the latter.  Since the training and testing data are not fundamentally different, this suggests that there is some bias created by the assumption of multivariate normality made by the parametric Bayesian solution.  The Gibbs posterior, however, is apparently not susceptible to this model misspecification bias.  

\begin{figure}[t]
\begin{center}
\scalebox{0.7}{\includegraphics{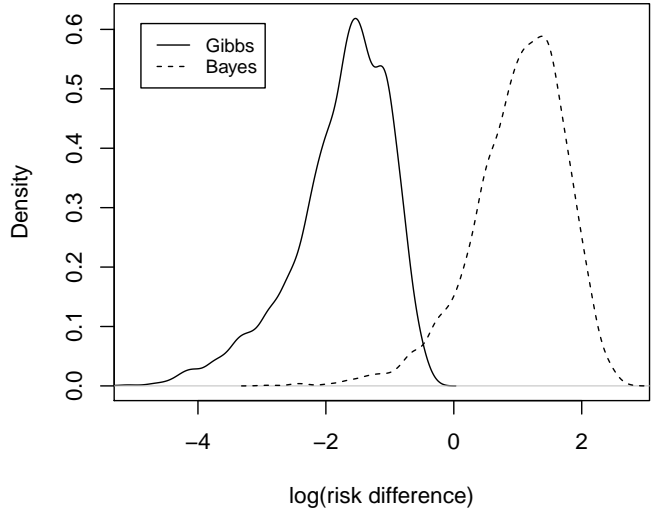}}
\end{center}
\caption{Plots of (kernel density estimates of) the Gibbs and Bayes posterior distribution for the log risk difference \eqref{eq:log.test.risk} based on the testing data.}
\label{fig:risktest}
\end{figure}

\section{Concluding remarks}
\label{S:discuss}

In this paper, we have studied multivariate medians and quantiles in a Gibbs posterior framework. Our approach does not need a model and is free from the potential issues that may arise in a model-based parametric Bayesian approach, in particular, model misspecification bias. The Gibbs posterior is simple to use and is also theoretically justified in the sense that the posterior concentrates around the true multivariate quantile at the optimal $n^{-1/2}$ rate, and has a Bernstein--von Mises property, i.e., it can be approximated by a suitable Gaussian distribution centered at the sample spatial median.  We also pointed out that this Gaussian approximation holds in other problems, not just multivariate quantiles, provided that the empirical risk satisfies a version of the local asymptotic normality property.  

A unique feature of the Gibbs posterior is its dependence on the choice of learning rate.  On one hand, this dependence might seem like a disadvantage, since the learning rate is not determined by the context of the problem, and there is no universally accepted choice.  On the other hand, as we argued here, being able to choose the learning rate is an advantage in the sense that it provides the flexibility necessary to at least partially correct for the covariance mismatch in the Gaussian approximation discussed in Section~\ref{SS:theory}.  Here we recommend the data-driven learning rate selection procedure of \citet{syring2018calibrating}, summarized in Algorithm~\ref{algo:gpc}, as it aims to set $\omega$ so that the Gibbs posterior credible region achieves the nominal frequentist coverage probability.  

While the learning rate selection procedure described in Algorithm~\ref{algo:gpc} works well empirically, there are still some opportunities for improvement and some unanswered questions.  The main disadvantage of this strategy is having to do multiple Monte Carlo runs on each bootstrap sample; this could be improved by carrying out some of the steps in parallel.  In terms of open questions, so far there is no theory to support the claim that choosing the learning rate according to Algorithm~\ref{algo:gpc} will, as advertised, produce credible sets that achieve the nominal frequentist coverage.  \citet{syring2018calibrating} argue that, under regularity conditions like those here that yield root-$n$ convergence rates, there exists a value of $\omega$ such that the Gibbs credible regions achieve the nominal coverage.  Then Algorithm~\ref{algo:gpc} uses standard simulation-based techniques---bootstrap, Monte Carlo, and stochastic approximation---to find this solution.  There is no reason to doubt that it would work, and the empirical results confirm this.  Unfortunately, these three standard simulation-based techniques working in tandem make the algorithm quite difficult to analyze theoretically.  That makes it an interesting open problem.


There are a couple of possible extensions of the work presented herein:
\begin{itemize}
\item It would be interesting to explore cases where the dimension $d$ exceeds the sample size, i.e., a so-called ``high-dimensional setting,'' with $d \gg n$.  For such cases, we would need to assume some low-dimensional structure in the high-dimensional $\theta^\star$, and then specify a prior distribution that would encourage this structure.  Sparsity-inducing priors \citep[e.g.,][]{pas.szabo.vaart.uq, castillo.vaart.2012, ebcvg} have been popular in recent years, and one of the examples in \citet{gibbs.general} shows that this kind of sparsity can be readily handled within the Gibbs framework, but the details for a sparse, high-dimensional multivariate quantile have yet to be worked out.
\vspace{-2mm}
\item The Gibbs posterior approach can also be used in multivariate quantile regression. Consider a linear regression set-up with a $d$-variate response vector $y$ and a $q$-dimensional regressor $x$ satisfying the linear model $y=\beta^\top x +e$, with $\beta$ a $q \times d$ matrix of regression coefficients.  For $u \in B_2^{(d)}$, and a sample $(x_i,y_i)$ of response and regressor pairs, the $u^\text{th}$ sample geometric quantile of $y$ given $x$ is obtained as
\[ Q_{y \mid x}(u)=\arg \min_{\beta \in \RR^d} \frac{1}{n}\sum_{i=1}^n\{\|y_i-\beta^\top x_i \|_r + \langle u, y_i-\beta^\top x_i \rangle\}. \]
In a typical Bayesian approach, we would have to choose a model for the errors such that its quantile agrees with the target quantile; of course, there are many such models, so having to make such a choice puts the data analyst at risk of model misspecification bias.  On the other hand, it is easy to formulate a Gibbs posterior framework, which is free of such modeling and the associated risks.
We expect that the theoretical results for the Gibbs posterior presented herein would carry over to this more general setting, but we have yet to verify this conjecture.  

\end{itemize}

\section*{Acknowledgments}

Thanks go to two anonymous reviewers for their helpful comments that lead to a number of improvements to the manuscript, both in presentation and strength of results.  This work is partially supported by the U.S.~National Science Foundation, DMS--1811802.

\appendix

\section{Proofs}

\subsection{Preliminary results}

Recall that $\ell_{\theta}(x) = \|x -\theta \|_r +\langle u, x-\theta \rangle$, and $R(\theta)=P\ell_{\theta}$, and the second derivative matrix is given by $V_{\theta^{\star}}$.  First, we want to bound
\begin{equation}
    \inf_{\|\theta-\theta^{\star}\|_2 >\delta} R(\theta)-R(\theta^{\star}).
    \label{eq11}
\end{equation}

\begin{lemma}
\label{lem:separation}
Under Assumptions~\ref{asmp:density1}--\ref{asmp:density2}, there exists a constant $C>0$ such that \eqref{eq11} is lower-bounded by $C\delta^2$ for all sufficiently small $\delta>0$.
\end{lemma}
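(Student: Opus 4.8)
The plan is to establish three ingredients in turn: the convexity of the risk $R$, a local quadratic lower bound for $R-R(\theta^\star)$ coming from a positive definite Hessian at $\theta^\star$, and a convexity argument that promotes this local bound to the uniform bound claimed over $\{\|\theta-\theta^\star\|_2>\delta\}$.

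First, convexity. For each fixed $x$, the map $\theta\mapsto\ell_\theta(x)=\|x-\theta\|_r+\langle u,x-\theta\rangle$ is convex, being the sum of a norm composed with an affine map and an affine function; taking expectations preserves convexity, so $R=P\ell_\theta$ is convex on $\RR^d$. Second, and this is the technical heart of the lemma, I would show that $R$ is twice differentiable at $\theta^\star$ with $\dot R(\theta^\star)=0$ and $V_{\theta^\star}=\ddot R(\theta^\star)$ positive definite, which is the consequence flagged in the main text. The linear term contributes nothing to the Hessian, so the analysis reduces to $\theta\mapsto P\|X-\theta\|_r$. Writing $y=x-\theta$, the Hessian $\ddot\ell_{\theta}(x)=\nabla^2_\theta\|x-\theta\|_r$ is positive semidefinite and, by Euler's identity for the degree-one homogeneous norm, annihilates the radial direction while being strictly positive definite on its orthogonal complement; moreover it is homogeneous of degree $-1$, so its entries blow up like $\|y\|_2^{-1}$ as $y\to 0$. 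I would justify differentiating twice under the integral sign by dominating this singular kernel: since $\int_{\|y\|_2<1}\|y\|_2^{-1}\,dy<\infty$ in dimension $d\geq 2$ and the density $p$ is bounded on compacts by Assumption~\ref{asmp:density2}, the integral defining $V_{\theta^\star}$ converges. Positive definiteness then follows from Assumption~\ref{asmp:density1}: for any $v\neq 0$, the integrand $v^\top\ddot\ell_{\theta^\star}(x)\,v$ is nonnegative and vanishes only on the measure-zero set where $x-\theta^\star$ is parallel to $v$, so integrating against a density that is continuous and bounded away from $0$ on a full-dimensional compact set yields $v^\top V_{\theta^\star}v>0$.

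Third, a second-order Taylor expansion at $\theta^\star$, using $\dot R(\theta^\star)=0$, gives $R(\theta)-R(\theta^\star)=\tfrac12(\theta-\theta^\star)^\top V_{\theta^\star}(\theta-\theta^\star)+o(\|\theta-\theta^\star\|_2^2)$, so there is a radius $\rho>0$ and a constant $C>0$ (any $C$ below a quarter of the smallest eigenvalue of $V_{\theta^\star}$) with $R(\theta)-R(\theta^\star)\geq C\|\theta-\theta^\star\|_2^2$ whenever $\|\theta-\theta^\star\|_2\leq\rho$. I would then extend this bound to all $\theta$ with $\|\theta-\theta^\star\|_2>\rho$ using convexity: the boundary point $\theta'=\theta^\star+\rho(\theta-\theta^\star)/\|\theta-\theta^\star\|_2$ lies on the segment from $\theta^\star$ to $\theta$ with parameter $t=\rho/\|\theta-\theta^\star\|_2\in(0,1)$, so convexity gives $R(\theta')-R(\theta^\star)\leq t\{R(\theta)-R(\theta^\star)\}$, whence $R(\theta)-R(\theta^\star)\geq C\rho^2/t=C\rho\|\theta-\theta^\star\|_2\geq C\rho^2$. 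Combining the two regimes, for every $\delta\in(0,\rho)$ and every $\theta$ with $\|\theta-\theta^\star\|_2>\delta$ we get $R(\theta)-R(\theta^\star)\geq C\delta^2$, which is the claim.

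The main obstacle is the second step: rigorously interchanging the second derivative with the expectation despite the $\|y\|_2^{-1}$ singularity of the norm's Hessian at $y=0$, and then confirming that the limiting matrix is genuinely positive definite rather than merely positive semidefinite. Both points hinge on the two density assumptions, as boundedness controls integrability of the singular kernel while the lower bound on a full-dimensional set supplies strict positivity in every direction.
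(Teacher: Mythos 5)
Your proof is correct and rests on the same core idea as the paper's: twice differentiability of $R$ at $\theta^\star$ with positive definite Hessian $V_{\theta^\star}$ (Assumption~\ref{asmp:density2} giving integrability of the singular Hessian kernel, Assumption~\ref{asmp:density1} giving strict positivity), followed by a second-order Taylor expansion using $\dot R(\theta^\star)=0$. The difference is that the paper's proof stops at the Taylor expansion and takes $C=\tfrac12\lambda_{\text{min}}(V_{\theta^\star})$, which by itself only controls $R(\theta)-R(\theta^\star)$ in a neighborhood of $\theta^\star$; since $\{\theta:\|\theta-\theta^\star\|_2>\delta\}$ is unbounded, some additional argument is needed to rule out small risk values far from $\theta^\star$. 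Your third step---using convexity of $R$ to push the quadratic bound on the sphere of radius $\rho$ out to the whole exterior region---supplies exactly this globalization, which the paper leaves implicit, and your Cauchy--Schwarz-type argument that the Hessian kernel annihilates only the radial direction makes the positive definiteness of $V_{\theta^\star}$ explicit where the paper merely asserts it. The one imprecision is your claim that the Hessian entries blow up like $\|y\|_2^{-1}$: for $r\in(1,2)$ the diagonal entries behave like $|y_j|^{r-2}$ and so also blow up near the coordinate hyperplanes, not just at $y=0$, so the dominating function must account for those (still locally integrable, since $r-2>-1$) singularities; this is a cosmetic fix, and the paper's own justification is no more careful on this point. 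Note also that both your argument and the paper's require $d\geq 2$ for the local integrability of the kernel, which is implicit in the multivariate setting.
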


\begin{proof}
The function $R$ is twice-differentiable at $\theta^{\star}$, and the second derivative matrix is given by $V_{\theta^{\star}}=\ddot R(\theta^\star)=\int v_{\theta^{\star}}(x) \, P(dx)$, where
\[ v_{\theta^{\star}}(x)=\frac{r-1}{\Vert x-\theta^{\star}\Vert_r}\Big[\diag \left(\frac{\vert x_1-\theta^{\star}_1\vert^{r-2}}{\Vert x-\theta^{\star}\Vert_r^{r-2}},\dots,\frac{\vert x_d-\theta^{\star}_d\vert^{r-2}}{\Vert x-\theta^{\star}\Vert_r^{r-2}}\right) - \frac{yy^\top}{\Vert x-\theta^{\star}\Vert_r^{2(r-1)}}\Big], \]
with $y=y(x,\theta^\star)$ being equal to
\[ y = \bigl( \vert x_1-\theta^{\star}_1 \vert^{r-1}\sign(x_1-\theta^{\star}_1),\dots,\vert x_d-\theta_d^{\star} \vert^{r-1}\sign(x_d-\theta_d^{\star}) \bigr)^\top. \]
The existence of $V_{\theta^{\star}}$ can be verified using Assumption~\ref{asmp:density2}, i.e., for a fixed $\theta \in \RR^d$, if $P$ has a density $p$ that is bounded on compact subsets of $\RR^d$, then the expectation of $\Vert X-\theta \Vert_r^{-1}$ is finite. Since the first derivative vanishes at $\theta^{\star}$, the Taylor expansion takes the form
\begin{equation*}
R(\theta)-R(\theta^{\star})=\tfrac{1}{2}(\theta-\theta^{\star})^\top V_{\theta^{\star}}(\theta-\theta^{\star})+o(\Vert \theta -\theta^{\star}\Vert_2^2).
\end{equation*}
Since $V_{\theta^{\star}}$ is positive definite, the proof follows by taking $C=\frac12 \lambda_\text{min}(V_{\theta^\star})$, where $\lambda_{\text{min}}(M)$ returns the smallest eigenvalue of the matrix $M$.
\end{proof}

Now, rewrite the Gibbs posterior distribution as
\begin{equation}
\label{eq:post.app}
\Pi_n(A) = \frac{N_n(A)}{D_n} = \frac{\int_A e^{-\omega n\{R_n(\theta)-R_n(\theta^{\star})\}} \, \Pi(d\theta)} {\int e^{-\omega n\{R_n(\theta)-R_n(\theta^{\star})\}} \, \Pi(d\theta)}, \quad A \subseteq \RR^d. 
\end{equation}
Below we will investigate the limiting behavior of $\Pi_n(A)$ for two kinds of sets $A$: the first is for a fixed $A=\XX^c$, where $\XX$ is defined in Assumption~\ref{asmp:density1}, and the second is for a sequence of suitably shrinking sets $A_n$ to be defined below.  

\begin{lemma}
\label{lem:consistency}
Under Assumptions~\ref{asmp:density1}--\ref{asmp:density2}, for any $\omega > 0$, the Gibbs posterior $\Pi_n$ satisfies $P^n \Pi_n(\XX^c) = o(1)$ as $n \to \infty$.
\end{lemma}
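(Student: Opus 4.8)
The plan is to control the numerator and denominator of the representation \eqref{eq:post.app} of $\Pi_n(\XX^c)$ separately, showing that on an event of $P^n$-probability tending to one the numerator $N_n(\XX^c)$ is exponentially smaller than the denominator $D_n$. Since $\Pi_n(\XX^c)\in[0,1]$, this convergence in $P^n$-probability upgrades to the stated convergence in mean, $P^n\Pi_n(\XX^c)=o(1)$, by bounded convergence. The two elementary facts driving everything are that $\theta\mapsto\ell_\theta(x)$ is convex (so $R$ and $R_n$ are convex), and that, by the reverse triangle inequality together with $\|u\|_q<1$, the loss increment is uniformly Lipschitz: $|\ell_\theta(x)-\ell_{\theta^\star}(x)|\le(1+\|u\|_q)\|\theta-\theta^\star\|_r$ for every $x$.

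For the denominator I would fix a small radius $\rho>0$ with $B=\{\theta:\|\theta-\theta^\star\|_r\le\rho\}\subset\mathrm{int}\,\XX$ and $\Pi(B)>0$; here we use that $\theta^\star$ lies in the support of $\Pi$, which is supplied by Assumption~\ref{asmp:prior}. The Lipschitz bound gives $|R_n(\theta)-R_n(\theta^\star)|\le(1+\|u\|_q)\rho$ for all $\theta\in B$, hence the deterministic lower bound $D_n\ge e^{-\omega n(1+\|u\|_q)\rho}\,\Pi(B)$.

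For the numerator, set $d_0=\inf_{\theta\in\XX^c}\|\theta-\theta^\star\|_r>0$ (positive because $\theta^\star\in\mathrm{int}\,\XX$), fix $M_0>d_0$, and let $K=\{\theta:d_0\le\|\theta-\theta^\star\|_r\le M_0\}$, a compact set not containing $\theta^\star$. Since $R$ is convex and $\theta^\star$ is a strict local, hence the unique global, minimizer (the positive-definiteness of $V_{\theta^\star}$ noted before Lemma~\ref{lem:separation}), continuity and compactness give population separation $\inf_{\theta\in K}\{R(\theta)-R(\theta^\star)\}=:\epsilon_1>0$. I would then transfer this to $R_n$ by a uniform law of large numbers on $K$: the map $\theta\mapsto(\PP_n-P)(\ell_\theta-\ell_{\theta^\star})$ is Lipschitz with a data- and $n$-free constant and converges to $0$ pointwise (each increment being bounded, hence integrable), so a covering argument yields $\sup_{\theta\in K}|(\PP_n-P)(\ell_\theta-\ell_{\theta^\star})|\to0$ in $P^n$-probability. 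Thus, with probability tending to one, $\inf_{\theta\in K}\{R_n(\theta)-R_n(\theta^\star)\}\ge\epsilon_1/2$.

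The main obstacle is extending this separation from the compact $K$ to the unbounded tail $\{\|\theta-\theta^\star\|_r>M_0\}$, where a naive uniform-convergence or moment-generating-function bound fails because the increment grows linearly in $\|\theta-\theta^\star\|_r$ while no moment or tail condition is placed on $P$. The clean resolution is to use convexity of $R_n$: for $\theta$ with $\|\theta-\theta^\star\|_r=L>M_0$, the point $\theta_0=(1-M_0/L)\theta^\star+(M_0/L)\theta$ lies on the sphere $\{\|\cdot-\theta^\star\|_r=M_0\}\subset K$, and convexity gives $R_n(\theta_0)-R_n(\theta^\star)\le(M_0/L)\{R_n(\theta)-R_n(\theta^\star)\}$, whence $R_n(\theta)-R_n(\theta^\star)\ge(L/M_0)(\epsilon_1/2)\ge\epsilon_1/2$. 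On the event above we therefore obtain $\inf_{\theta\in\XX^c}\{R_n(\theta)-R_n(\theta^\star)\}\ge\epsilon_1/2$, so $N_n(\XX^c)\le e^{-\omega n\epsilon_1/2}\,\Pi(\XX^c)\le e^{-\omega n\epsilon_1/2}$. Choosing $\rho$ small enough that $(1+\|u\|_q)\rho<\epsilon_1/2$ makes $N_n(\XX^c)/D_n\le\Pi(B)^{-1}e^{-\omega n\{\epsilon_1/2-(1+\|u\|_q)\rho\}}\to0$ on that event, which completes the argument.
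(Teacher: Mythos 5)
Your proof is correct, and it shares the paper's skeleton---bound $N_n(\XX^c)$ and $D_n$ separately, use convexity of $R_n$ to push a separation bound on a compact set out to the unbounded tail, then upgrade convergence in probability to convergence in mean by bounded convergence---but the two key estimates are obtained by genuinely different means. For the numerator, the paper applies the law of large numbers at fixed points of the sphere $\{\theta:\|\theta-\theta^\star\|_2=K\}$ and then invokes Lemma~1 of \citet{hjort.pollard.1993}, a uniform-convergence result tailored to convex random functions, to conclude $\inf_u\{R_n(\theta^\star+Ku)-R_n(\theta^\star)\}\to\Delta>0$; the extension from the sphere to all of $\XX^c$ is left implicit. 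You instead prove a uniform law of large numbers on a compact annulus by hand (the uniform Lipschitz bound plus a covering argument) and spell out the convexity ray inequality $R_n(\theta)-R_n(\theta^\star)\ge (L/M_0)\{R_n(\theta_0)-R_n(\theta^\star)\}$ explicitly---more elementary and self-contained, same effect. The more substantive divergence is the denominator: the paper invokes Lemma~\ref{lem:den}, whose proof is a comparatively delicate argument (standardized empirical process, Chebyshev, Fubini) yielding the sharp bound $D_n\gtrsim \eps_n^d e^{-\omega(1+c_n)}$ that is needed later for the rate theorem, whereas you observe that for mere consistency a crude deterministic bound suffices, namely $D_n\ge e^{-\omega n(1+\|u\|_q)\rho}\,\Pi(B)$ from the Lipschitz property of the loss, closing the argument by taking $\rho$ small relative to the separation constant $\epsilon_1$. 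Your route buys a proof of Lemma~\ref{lem:consistency} that stands on its own; the paper's route buys brevity by reusing machinery it must develop anyway, and its denominator bound retains the polynomial-in-$\eps_n$ sharpness that your exponentially decaying bound sacrifices (harmless here, useless for Theorem~\ref{thm:rate}). One final point: the lemma is stated under Assumptions~\ref{asmp:density1}--\ref{asmp:density2} only, yet your denominator bound needs prior mass near $\theta^\star$, i.e., Assumption~\ref{asmp:prior}. This is not a defect of your argument relative to the paper's: the paper's own proof inherits exactly the same requirement through Lemma~\ref{lem:den}, which is proved under Assumption~\ref{asmp:prior}, and some positivity of the prior near $\theta^\star$ is genuinely indispensable for any proof of this statement.
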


\begin{proof}
Without loss of generality, assume $\XX = \{x: \|x-\theta^\star\|_2 \leq K\}$ for $K > 0$. We start with the numerator $N_n(\XX^c)$. By the law of large numbers, for any fixed unit vector $u$, 
\[ R_n(\theta^\star + Ku) - R_n(\theta^\star) \to R(\theta^\star + Ku) - R(\theta^\star), \quad \text{in $P^n$-probability}. \]
Moreover, is easy to check that $\theta \mapsto R_n(\theta)$ is almost surely convex, so it follows from Lemma~1 in \citet{hjort.pollard.1993} that
\[ \inf_u \{R_n(\theta^\star + Ku) - R_n(\theta^\star)\} \to \Delta \quad \text{in $P^n$-probability}, \]
where the infimum is over all unit vectors $u$, and $\Delta = \inf_u \{R(\theta^\star + Ku) - R(\theta^\star)\}$ is strictly positive.  Then the event $\{N_n(\XX^c) \leq e^{-\omega n (\Delta/2)}\}$ is implied by 
\[ \inf_u \{R_n(\theta^\star + Ku) - R_n(\theta^\star)\} > \Delta/2, \]
which has $P^n$-probability converging to 1.  

For the denominator $D_n$, Lemma~\ref{lem:den} below establishes that $D_n > \frac12 \eps_n^d e^{-\omega(1+c_n)}$, with $P^n$-probability converging to 1, where $c_n$ is such that $c_n \to \infty$ arbitrarily slowly.  Then, with $P^n$-probability converging to 1,
\[ \Pi_n(\XX^c) = \frac{N_n(\XX^c)}{D_n} \leq 2e^{-n\omega \Delta / 2 + \omega(1+c_n) + d \log \eps_n}. \]
We are free to choose $c_n = o(n)$, so the upper bound is vanishing.  Therefore, we can conclude that $\Pi_n(\XX^c) \to 0$ in probability.  Since $\Pi_n(\XX^c)$ bounded and converges to 0 in probability, the claim follows from the dominated convergence theorem.
\end{proof}

\begin{lemma}
\label{lem:donsker}
Define the empirical process $\GG_n f = n^{1/2}(\PP_n f - P f)$ and, for the compact $\XX$ in Lemma~\ref{lem:consistency}, define the event 
\[ \mathscr{B}_n = \bigl\{X^n: |\GG_n(\ell_\theta - \ell_{\theta^\star})| > b_n \|\theta-\theta^\star\|_2 \;\; \text{for some $\theta \in \XX$} \bigr\}, \]
where $b_n > 0$ is any divergent sequence $b_n \to \infty$.  Then $P^n(\mathscr{B}_n) = o(1)$ as $n \to \infty$. 
\end{lemma}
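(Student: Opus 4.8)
The plan is to reduce the lemma to a tightness statement. Since $b_n \to \infty$ is an arbitrary divergent sequence, writing
\[ S_n = \sup_{\theta \in \XX,\, \theta \neq \theta^\star} \frac{|\GG_n(\ell_\theta - \ell_{\theta^\star})|}{\|\theta - \theta^\star\|_2}, \]
one has $\mathscr{B}_n = \{S_n > b_n\}$, and the claim $P^n(\mathscr{B}_n) = o(1)$ for every such $b_n$ is precisely equivalent to $S_n = O_P(1)$. The driving observation is that the loss has a uniformly bounded gradient: since $|x_j - \theta_j| \le \|x-\theta\|_r$, the $j$th coordinate of $\dot\ell_\theta(x)$ obeys $|\dot\ell_\theta(x)_j| \le 1 + |u_j|$. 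Combining the reverse triangle inequality for $\|\cdot\|_r$ with Cauchy--Schwarz for the inner-product term then gives bounded, uniformly Lipschitz increments, $|\ell_\theta(x) - \ell_{\theta^\star}(x)| \le L \|\theta - \theta^\star\|_2$, with $L$ depending only on $r$, $d$, and $\|u\|_q$ and not on $x$. Hence $\F = \{\ell_\theta - \ell_{\theta^\star} : \theta \in \XX\}$ is a uniformly bounded class that is Lipschitz in a finite-dimensional parameter over the compact set $\XX$; by the usual bracketing-entropy bound for such classes it is $P$-Donsker, and moreover $\|\ell_\theta - \ell_{\theta^\star}\|_{P,2} \le L\|\theta - \theta^\star\|_2$, writing $\|f\|_{P,2} = (Pf^2)^{1/2}$.

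The key to getting $S_n = O_P(1)$ without picking up a spurious logarithmic factor is to strip off the leading linear term before normalizing. I would decompose $\ell_\theta - \ell_{\theta^\star} = \langle \dot\ell_{\theta^\star}, \theta - \theta^\star\rangle + R_\theta$ and treat the two pieces separately. The linear piece is controlled by the multivariate central limit theorem: since $\dot\ell_{\theta^\star}$ is bounded, hence square-integrable, the vector $\GG_n \dot\ell_{\theta^\star}$ (the empirical process applied coordinatewise) is $O_P(1)$, so
\[ \sup_{\theta}\frac{|\langle \GG_n\dot\ell_{\theta^\star},\, \theta - \theta^\star\rangle|}{\|\theta-\theta^\star\|_2} \le \|\GG_n\dot\ell_{\theta^\star}\|_2 = O_P(1). \]

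For the remainder, note that $\{R_\theta : \theta \in \XX\}$ is the difference of the Donsker class $\F$ and the finite-dimensional linear class $\{\langle\dot\ell_{\theta^\star}, \theta-\theta^\star\rangle : \theta \in \XX\}$, and is therefore itself $P$-Donsker. The gain over the crude Lipschitz bound is that, because $\theta \mapsto \dot\ell_\theta$ is continuous in quadratic mean at $\theta^\star$, the remainder has sub-linear $L_2(P)$ size, $\|R_\theta\|_{P,2} = o(\|\theta-\theta^\star\|_2)$ as $\theta \to \theta^\star$. I would then run a peeling argument over dyadic shells $\{2^{-(j+1)}K < \|\theta-\theta^\star\|_2 \le 2^{-j}K\}$: on each shell the denominator is constant up to a factor of two, and the empirical-process maximal inequality for the Donsker class $\{R_\theta\}$ bounds the numerator by the modulus of continuity evaluated at $\|R_\theta\|_{P,2}$ \citep{van2000asymptotic}. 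Because that modulus is sub-linear in $\|\theta-\theta^\star\|_2$, the per-shell contributions are summable in $j$, yielding $\sup_\theta |\GG_n R_\theta|/\|\theta-\theta^\star\|_2 = O_P(1)$; together with the linear bound this gives $S_n = O_P(1)$.

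I expect the main obstacle to be this last step, specifically pinning down the rate at which $\|R_\theta\|_{P,2}$ vanishes relative to $\|\theta-\theta^\star\|_2$: mere sub-linearity is not quite enough for the shell bounds to sum, so one needs the quadratic-mean differentiability to be quantitative (a H\"older-type modulus for the gradient), ensuring the decay beats the logarithmic factor coming from the maximal inequality, with the scales below $n^{-1/2}$ handled directly by the linearization where $\GG_n R_\theta$ is negligible. This is exactly where Assumptions~\ref{asmp:density1}--\ref{asmp:density2} are used, through the integrability of $\|X-\theta\|_r^{-1}$ and the second-order behavior of $R$ near $\theta^\star$ established in the proof of Lemma~\ref{lem:separation}.
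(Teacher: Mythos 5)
Your reduction to showing $S_n := \sup_{\theta\in\XX,\,\theta\neq\theta^\star} |\GG_n(\ell_\theta-\ell_{\theta^\star})|/\|\theta-\theta^\star\|_2 = O_P(1)$ is correct, and your route (split off the linear term $\langle\dot\ell_{\theta^\star},\theta-\theta^\star\rangle$, control it by the CLT, then peel the remainder) is genuinely different from the paper's, which never linearizes: the paper combines the Donsker property of $\{\ell_\theta-\ell_{\theta^\star}:\theta\in\XX\}$ (hence stochastic equicontinuity) with a pointwise Bernstein bound whose variance proxy $\|\theta-\theta^\star\|_2^2$ comes from the Lipschitz property, plus a union bound over a finite net. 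However, your proposal has a genuine gap exactly where you suspect it does, and the patch you propose does not work as stated. The problem is the regime $\|\theta-\theta^\star\|_2\lesssim n^{-1/2}$, i.e., the infinitely many innermost shells. You claim that there ``$\GG_n R_\theta$ is negligible'' directly by the linearization; it is not, because the Taylor remainder $R_\theta$ is \emph{not} uniformly $o(\|\theta-\theta^\star\|_2)$ in $x$: for $x$ within distance of order $\|\theta-\theta^\star\|_2$ of $\theta^\star$, the gradient $\dot\ell_\theta(x)$ moves by an amount of order one (the Hessian blows up like $\|x-\theta\|_r^{-1}$), so $\|R_\theta\|_\infty\asymp\|\theta-\theta^\star\|_2$, and the crude bound $|\GG_n R_\theta|\le 2n^{1/2}\|R_\theta\|_\infty$ yields a ratio of order $n^{1/2}$, not $o_P(1)$. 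The quadratic-mean modulus $\|R_\theta\|_{P,2}=o(\|\theta-\theta^\star\|_2)$ controls only the $L_2$ size; the bracketing maximal inequality you would apply on each shell carries an additional sup-norm correction of order $\|R_\theta\|_\infty/\sqrt{n}$, which after normalization is $\asymp n^{-1/2}$ on \emph{every} shell, with no decay in the shell index. Consequently the peeling can be pushed down only to scale about $n^{-1/2}$ (order $\log n$ shells), and the innermost ball remains uncontrolled; this is the missing step, not a routine detail.

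There are at least two ways to close it. The delicate way is a dedicated argument for $\{\theta: \|\theta-\theta^\star\|_2\le n^{-1/2}\}$: the set of $x$ where $R_\theta$ is of full order is a ball of radius $\asymp\|\theta-\theta^\star\|_2$ around $\theta^\star$, which by Assumption~\ref{asmp:density2} has probability $\lesssim\|\theta-\theta^\star\|_2^d$ and so captures $O_P(1)$ observations, while outside it $|R_\theta(x)|\lesssim\|\theta-\theta^\star\|_2^2/\|x-\theta^\star\|_2$; making this uniform over $\theta$ still requires some chaining. The clean way is to exploit convexity, which makes both the linearization and the peeling unnecessary: since $\theta\mapsto\ell_\theta(x)$ is convex, the difference quotients $t\mapsto\{\ell_{\theta^\star+te}(x)-\ell_{\theta^\star}(x)\}/t$ are nondecreasing in $t$ for every $x$ and every unit vector $e$, and are Lipschitz in $e$ uniformly in $t$; a uniformly bounded family that is monotone in a one-dimensional index and Lipschitz in a compact index has a finite bracketing entropy integral (order $\epsilon^{-2}$ brackets per direction, $\epsilon^{-(d-1)}$ directions), so the entire normalized class $\{(\ell_\theta-\ell_{\theta^\star})/\|\theta-\theta^\star\|_2:\theta\in\XX,\ \theta\neq\theta^\star\}$ is itself $P$-Donsker and $S_n=O_P(1)$ follows in one step. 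For what it is worth, the paper's own proof is terse about precisely the same region (it asserts that finitely many points plus equicontinuity suffice, which is delicate because the normalizer $b_n\|\theta-\theta^\star\|_2$ degenerates at $\theta^\star$), so your instinct about where the difficulty sits is correct---but your proposal, as written, does not resolve it.
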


\begin{proof}
Since the loss function is Lipschitz and the domain of $\theta \mapsto \ell_\theta - \ell_{\theta^\star}$ is restricted to the compact $\XX$, it follows from Example 19.7 in \citet{van2000asymptotic} that $\{\ell_\theta - \ell_{\theta^\star}: \theta \in \XX\}$ is a Donsker class.  This implies weak convergence and, in particular, stochastic equicontinuity of the empirical process $\theta \mapsto \GG_n(\ell_\theta - \ell_{\theta^\star})$ on $\XX$.  Therefore, the uniform-on-$\XX$ behavior of the empirical process is controlled by its behavior at finitely many points in $\XX$.  At any fixed $\theta \in \XX$, Bernstein's inequality gives 
\[ P^n\bigl\{ |\GG_n(\ell_\theta - \ell_{\theta^\star})| > t \bigr\} \leq 2 \exp\Bigl\{- \frac{\frac12 t^2}{\|\theta-\theta^\star\|_2^2 + t \|\theta-\theta^\star\|_2} \Bigr\}. \]
If we take $t=\{2b_n \|\theta-\theta^\star\|_2^2\}^{1/2}$, then the right-hand side of the above display is 
\[ \exp\Bigl\{ -\frac{b_n \|\theta-\theta^\star\|_2^2}{\|\theta-\theta^\star\|_2^2 + (2b_n)^{1/2} \|\theta-\theta^\star\|_2^2} \Bigr\} \leq \exp\{-\text{constant} \times b_n^{1/2}\}. \]
Since $b_n \to \infty$, the upper bound is vanishing, so we get 
\[ P^n\bigl\{|\GG_n(\ell_\theta - \ell_{\theta^\star})| > b_n \|\theta-\theta^\star\|_2 \bigr\} = o(1), \quad \text{for each fixed $\theta \in \XX$}. \]
Then this, stochastic equicontinuity, and the union bound gives the desired result.  
\end{proof}

Recall that the events of interest in Theorem~\ref{thm:rate} are given by $A_n = \{\theta: \|\theta-\theta^\star\|_2 > a_n \eps_n\}$, where $\eps_n = n^{-1/2}$ and $a_n \to \infty$ is arbitrary.  To prove that theorem, we need to show that $\Pi_n(A_n) \to 0$ in expectation.  Our strategy is to find a lower bound on $D_n$ and an upper bound on $N_n(A_n)$, both defined in \eqref{eq:post.app}, such that the ratio of these two bounds is vanishing.  The next two lemmas accomplish each these two goals in turn.  

\begin{lemma}
\label{lem:den}
Let $c_n > 0$ be any divergent sequence, and define the event 
\[ \mathscr{C}_n = \{X^n: D_n \leq \tfrac12 \eps_n^d e^{-\omega(1+c_n)}\}. \]
Then under the conditions of Theorem~\ref{thm:rate}, $P^n(\mathscr{C}_n) = o(1)$ as $n \to \infty$. 
\end{lemma}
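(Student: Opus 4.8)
The plan is to lower-bound $D_n$ by restricting the defining integral in \eqref{eq:post.app} to a small Euclidean ball $B_n = \{\theta: \|\theta - \theta^\star\|_2 \leq \eps_n\}$ around $\theta^\star$, on which both the prior mass and the integrand can be controlled. On this ball the integrand $e^{-\omega n\{R_n(\theta) - R_n(\theta^\star)\}}$ is bounded below once $n\{R_n(\theta) - R_n(\theta^\star)\}$ is bounded above, and the prior mass $\Pi(B_n)$ is bounded below by Assumption~\ref{asmp:prior}. Writing $R_n(\theta) - R_n(\theta^\star) = \{R(\theta) - R(\theta^\star)\} + n^{-1/2}\GG_n(\ell_\theta - \ell_{\theta^\star})$ with the empirical process $\GG_n$ from Lemma~\ref{lem:donsker}, I would handle the two pieces separately: the deterministic piece via the local quadratic expansion established in the proof of Lemma~\ref{lem:separation}, and the stochastic piece via the event $\mathscr{B}_n^c$.

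First I would fix the link between the two divergent sequences. Given the arbitrary $c_n \to \infty$, I would choose the sequence $b_n$ in Lemma~\ref{lem:donsker} so that $b_n \to \infty$ but $b_n = o(c_n)$ (e.g., $b_n = c_n^{1/2}$), and let $\mathscr{B}_n$ be the corresponding event. On $\mathscr{B}_n^c$ and for $\theta \in B_n \subseteq \XX$, the bound $|\GG_n(\ell_\theta - \ell_{\theta^\star})| \leq b_n \|\theta - \theta^\star\|_2 \leq b_n \eps_n$ gives $n^{1/2}|\GG_n(\ell_\theta - \ell_{\theta^\star})| \leq b_n$. Meanwhile the Taylor expansion $R(\theta) - R(\theta^\star) = \tfrac12 (\theta - \theta^\star)^\top V_{\theta^\star}(\theta - \theta^\star) + o(\|\theta - \theta^\star\|_2^2)$ yields, for all large $n$ and uniformly over $B_n$, $n\{R(\theta) - R(\theta^\star)\} \leq C'$ for some constant $C' > 0$ (the $o$-term being negligible since $\eps_n \to 0$). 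Combining, $n\{R_n(\theta) - R_n(\theta^\star)\} \leq C' + b_n$ throughout $B_n$, so the integrand is at least $e^{-\omega(C' + b_n)}$ there.

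Next I would bound the prior mass: by Assumption~\ref{asmp:prior} there are a constant $\underline\pi > 0$ and a neighborhood of $\theta^\star$ on which $\pi \geq \underline\pi$, and since $\eps_n \to 0$ this neighborhood contains $B_n$ for large $n$, whence $\Pi(B_n) \geq \underline\pi \kappa_d \eps_n^d$ with $\kappa_d$ the volume of the unit ball in $\RR^d$. Restricting the integral in \eqref{eq:post.app} to $B_n$ then gives, on $\mathscr{B}_n^c$ and for large $n$,
\[ D_n \geq e^{-\omega(C' + b_n)} \underline\pi \kappa_d \eps_n^d. \]
The desired inequality $D_n > \tfrac12 \eps_n^d e^{-\omega(1 + c_n)}$ reduces, after cancelling $\eps_n^d$ and taking logarithms, to $\omega(1 + c_n - C' - b_n) > -\log(2\underline\pi\kappa_d)$, and the left side diverges to $+\infty$ because $c_n \to \infty$ while $b_n = o(c_n)$. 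Hence for all large $n$ the occurrence of $\mathscr{C}_n$ forces $\mathscr{B}_n$, i.e., $\mathscr{C}_n \subseteq \mathscr{B}_n$, and Lemma~\ref{lem:donsker} gives $P^n(\mathscr{C}_n) \leq P^n(\mathscr{B}_n) = o(1)$.

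I expect the main obstacle to be the correct coupling of the two divergent sequences $b_n$ and $c_n$: the empirical-process fluctuation of the exponent over the $\eps_n$-ball, after the $n^{1/2}$ scaling, is of exact order $b_n$, so to make it harmless relative to the prescribed slack $c_n$ in the denominator bound one must exploit the freedom in Lemma~\ref{lem:donsker} to let $b_n$ diverge strictly slower than $c_n$. Everything else—the quadratic control of the deterministic part via Lemma~\ref{lem:separation} and the volume lower bound on the prior mass via Assumption~\ref{asmp:prior}—is routine.
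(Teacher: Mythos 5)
Your proof is correct, but it takes a genuinely different route from the paper's. The paper never touches the empirical-process event $\mathscr{B}_n$ here: it defines a localization set $K_n=\{\theta: m(\theta)\vee v(\theta)\le\eps_n^2\}$ via the \emph{mean and variance} of the loss difference, standardizes the empirical risk difference as $Z_n(\theta)=[n\{R_n(\theta)-R_n(\theta^\star)\}-nm(\theta)]/\{nv(\theta)\}^{1/2}$, and controls the bad set $\{|Z_n(\theta)|\ge c_n\}$ only \emph{pointwise} in $\theta$ by Chebyshev's inequality, gluing these pointwise bounds together with Fubini's theorem and Markov's inequality; this yields the quantitative bound $P^n(\mathscr{C}_n)\le 2c_n^{-2}$ with no auxiliary sequence and no appeal to Lemma~\ref{lem:donsker}. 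You instead reuse the uniform control of $\GG_n(\ell_\theta-\ell_{\theta^\star})$ over the ball $B_n$ from Lemma~\ref{lem:donsker}, couple the two divergent sequences via $b_n=o(c_n)$, and deduce the event inclusion $\mathscr{C}_n\subseteq\mathscr{B}_n$ for large $n$. Both arguments are sound and there is no circularity in yours, since Lemma~\ref{lem:donsker} does not rely on Lemma~\ref{lem:den}. What each buys: your route is shorter given that Lemma~\ref{lem:donsker} is already available, and it handles the prior-mass constants more explicitly (absorbing them into the divergence of $c_n-b_n$, whereas the paper's step from $\Pi(K_n)\gtrsim\eps_n^d$ to the stated threshold is loose about the hidden constant); the paper's route is self-contained, needs only second-moment (Chebyshev) control rather than a Donsker/stochastic-equicontinuity property, gives an explicit rate $O(c_n^{-2})$ for $P^n(\mathscr{C}_n)$, and is the form of the argument (in the style of Ghosal--Ghosh--van der Vaart denominator bounds) that extends to settings where uniform empirical-process control over a neighborhood is unavailable. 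One small point to make explicit in your write-up: you need $B_n\subseteq\XX$ for large $n$ in order to invoke Lemma~\ref{lem:donsker}, which holds because $\theta^\star$ lies in the interior of $\XX$ (as in the reduction at the start of the proof of Lemma~\ref{lem:consistency}, where $\XX$ is taken to be a ball centered at $\theta^\star$).
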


\begin{proof}
Define the set $K_n = \{\theta: m(\theta) \vee v(\theta) \leq \eps_n^2\}$, where $a \vee b = \max(a,b)$, and 
\[ m(\theta) = R(\theta) - R(\theta^\star) \quad \text{and} \quad v(\theta) = P\{(\ell_\theta-\ell_{\theta^\star}) - m(\theta)\}^2, \]
are the mean and variance of the loss difference, respectively.  From the Lipschitz property of the loss and the Taylor approximation in the proof of Lemma~\ref{lem:separation}, it follows that 
\[ \|\theta-\theta^\star\|_2 \lesssim \eps_n \implies m(\theta) \vee v(\theta) \leq \eps_n^2. \]
Therefore, by Assumption~\ref{asmp:prior}, 
\[ \Pi(K_n) \geq \Pi(\{\theta: \|\theta-\theta^\star\|_2 \lesssim \eps_n\}) \gtrsim \eps_n^d. \]
Next, define a standardized version of the empirical risk difference, i.e., 
\[ Z_n(\theta) = \frac{n\{R_n(\theta) - R_n(\theta^\star)\} -  nm(\theta)}{\{ nv(\theta)\}^{1/2}}. \]
This is a function of both $\theta$ and the data $X^n$, so define the upper level sets 
\[ \Z_n = \{(\theta, X^n): |Z_n(\theta)| \geq c_n \}, \]
where $c_n$ is the sequence in the lemma statement. Also define the cross-sections 
\[ \Z_n(\theta) = \{X^n: (\theta, X^n) \in \Z_n\} \quad \text{and} \quad \Z_n(X^n) = \{\theta: (\theta, X^n) \in \Z_n\}. \]
Since we have 
\[ n\{R_n(\theta) - R_n(\theta^\star)\} = n m(\theta) + \{n v(\theta)\}^{1/2} Z_n(\theta), \]
and $m$, $v$, and $Z_n$ are suitably bounded on $K_n \cap \Z_n(X^n)^c$, we immediately get 
\[ D_n \geq \int_{K_n \cap \Z_n(X^n)^c} e^{-\omega n m(\theta) - \omega \{ nv(\theta)\}^{1/2} Z_n(\theta)} \, \Pi(d\theta) \geq e^{-\omega(1+c_n)} \Pi\{K_n \cap \Z_n(X^n)^c\}, \]
using the fact that $n\eps_n^2=1$.  From this lower bound, we get
\begin{align*}
P^n\{D_n \leq \tfrac12 \Pi(K_n) e^{-\omega(1+c_n)}\} & \leq P^n\bigl[ e^{-\omega (1+c_n)} \Pi\{K_n \cap \Z_n(X^n)^c\} \leq \tfrac12 \Pi(K_n) e^{-\omega(1+c_n)} \bigr] \\
& = P^n\bigl[ \Pi\{K_n \cap \Z_n(X^n)\} \geq \tfrac12 \Pi(K_n) \bigr] \\
& \leq \frac{2 P^n \Pi\{K_n \cap \Z_n(X^n)\}}{\Pi(K_n)},
\end{align*}
where the last line is by Markov's inequality.  Now use Fubini's theorem: 
\begin{align*}
P^n \Pi\{K_n \cap \Z_n(x^n)\} & = \int \int 1\{\theta \in K_n \cap \Z_n(x^n)\} \, \Pi(d\theta) \, P^n(dx^n) \\
& = \int \int 1\{\theta \in K_n\} \, 1\{\theta \in \Z_n(x^n)\} \, P^n(dx^n) \, \Pi(d\theta) \\
& = \int_{K_n} P^n\{\Z_n(\theta)\} \, \Pi(d\theta).
\end{align*}
By Chebyshev's inequality, $P^n\{\Z_n(\theta)\} \leq c_n^{-2}$, and hence  
\[ P^n\{D_n \leq \tfrac12 \Pi(K_n) e^{-\omega(1+c_n)}\} \leq 2 c_n^{-2}. \]
Putting everything together, since $\Pi(K_n) \gtrsim \eps_n^d$, we have that 
\[ P^n(\mathscr{C}_n) = P^n(D_n \leq \tfrac12 \eps_n^d e^{-\omega(1+c_n)}) \leq 2 c_n^{-2} = o(1), \quad n \to \infty. \qedhere \]
\end{proof}


\begin{lemma}
\label{lem:num}
Let $\XX$ and $\mathscr{B}_n$ be as in Lemma~\ref{lem:donsker}, with the sequence $b_n$ in $\mathscr{B}_n$ such that $b_n = o(a_n)$.  Then there exists a constant $k > 0$ such that
\[ P^n \{N_n(A_n \cap \XX) \, 1(\mathscr{B}_n^c)\} \lesssim \eps_n^d a_n^d e^{-\omega k a_n^2}, \quad \text{for all large $n$}. \]
\end{lemma}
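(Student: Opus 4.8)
The plan is to strip the data-dependence out of $N_n(A_n \cap \XX)$ on the event $\mathscr{B}_n^c$, reducing the $P^n$-expectation to a deterministic Gaussian-type integral. First I would rewrite the exponent using the empirical process notation of Lemma~\ref{lem:donsker}. Since $\GG_n(\ell_\theta - \ell_{\theta^\star}) = n^{1/2}\{(R_n(\theta) - R_n(\theta^\star)) - (R(\theta) - R(\theta^\star))\}$, multiplying through by $n^{1/2}$ gives
\[ n\{R_n(\theta) - R_n(\theta^\star)\} = n\{R(\theta) - R(\theta^\star)\} + n^{1/2}\,\GG_n(\ell_\theta - \ell_{\theta^\star}). \]
On $\mathscr{B}_n^c$, the definition of $\mathscr{B}_n$ bounds the last term below by $-n^{1/2} b_n \|\theta - \theta^\star\|_2$, uniformly over $\theta \in \XX$.

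Next I would upgrade the separation estimate of Lemma~\ref{lem:separation} to a pointwise quadratic lower bound valid on all of the compact $\XX$. The Taylor expansion there yields $R(\theta) - R(\theta^\star) \geq C\|\theta-\theta^\star\|_2^2$ for $\theta$ in a small ball about $\theta^\star$; combining this with the convexity of $R$ (noted in the proof of Lemma~\ref{lem:consistency}) and the bound $\|\theta - \theta^\star\|_2 \leq K$ on $\XX$—along each ray the value is monotone beyond the small ball, and a constant lower bound converts to a quadratic one after dividing by $K^2$—gives a constant, again call it $C$, with $R(\theta) - R(\theta^\star) \geq C\|\theta - \theta^\star\|_2^2$ for every $\theta \in \XX$. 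Writing $s = \|\theta - \theta^\star\|_2$, on $\mathscr{B}_n^c$ and for $\theta \in A_n \cap \XX$ (which may be taken nonempty, else there is nothing to prove) I then obtain $n\{R_n(\theta) - R_n(\theta^\star)\} \geq nCs^2 - n^{1/2} b_n s$. Because $s > a_n \eps_n = a_n n^{-1/2}$ forces $n^{1/2} s > a_n$, and because $b_n = o(a_n)$, the ratio $n^{1/2}b_n s/(nCs^2) < b_n/(C a_n) \to 0$, so for all large $n$ the linear term is at most half the quadratic term and $n\{R_n(\theta) - R_n(\theta^\star)\} \geq \tfrac12 C n \|\theta - \theta^\star\|_2^2$.

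With this deterministic lower bound in hand, $N_n(A_n \cap \XX)\,1(\mathscr{B}_n^c) \leq \int_{A_n \cap \XX} e^{-\omega C n \|\theta - \theta^\star\|_2^2/2}\,\Pi(d\theta)$, and the right-hand side no longer involves the data, so taking $P^n$-expectation leaves it unchanged. Finally I would bound the integral: the prior density is continuous, hence bounded on the compact $\XX$, so up to a constant the integral is $\int_{\|\theta - \theta^\star\|_2 > a_n \eps_n} e^{-\omega C n\|\theta-\theta^\star\|_2^2/2}\,d\theta$; the substitution $z = n^{1/2}(\theta - \theta^\star)$ produces the Jacobian factor $\eps_n^d$ and turns this into $\eps_n^d \int_{\|z\|_2 > a_n} e^{-\omega C\|z\|_2^2/2}\,dz$, whereupon a standard Gaussian tail estimate bounds the remaining integral by a constant times $a_n^d e^{-\omega k a_n^2}$ with $k = C/2$, giving the claim.

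The main obstacle is the combined step where the quadratic risk gain must dominate the linear empirical-process fluctuation uniformly across the annulus $A_n \cap \XX$: this is precisely where the hypotheses $b_n = o(a_n)$ and the inner radius $a_n\eps_n$ interact, and it is also where one must verify that the quadratic lower bound on $R - R(\theta^\star)$ holds uniformly over the entire compact $\XX$ rather than only in a shrinking neighbourhood of $\theta^\star$, since the integration region extends all the way out to radius $K$.
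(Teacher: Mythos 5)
Your proof is correct, and it reaches the lemma's bound by a genuinely different bookkeeping than the paper's. The paper runs a peeling argument: it splits $A_n \cap \XX$ into shells $\{t a_n \eps_n < \|\theta-\theta^\star\|_2 < (t+1)a_n\eps_n\}$, $t=1,\dots,T_n$, applies the separation bound of Lemma~\ref{lem:separation} at the inner radius of each shell, bounds the empirical-process contribution on $\mathscr{B}_n^c$ by $\omega(t+1)a_n b_n$ and the prior mass of each ball by a constant times $\{(t+1)a_n\eps_n\}^d$, and then sums the resulting series $\sum_{t\geq 1} e^{-\omega a_n^2 t^2(C - 2b_n/a_n)}(t+1)^d$, which is where $b_n = o(a_n)$ enters. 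You instead absorb the fluctuation pointwise: since $n^{1/2}\|\theta-\theta^\star\|_2 > a_n \gg b_n$ on $A_n$, the linear term $n^{1/2}b_n\|\theta-\theta^\star\|_2$ is at most half the quadratic term $Cn\|\theta-\theta^\star\|_2^2$, uniformly over $A_n \cap \XX$ for all large $n$, leaving a deterministic integral that the substitution $z = n^{1/2}(\theta-\theta^\star)$ plus a Gaussian tail estimate reduce to $\eps_n^d a_n^d e^{-\omega k a_n^2}$ with $k = C/2$. The ingredients are identical---the decomposition $n\{R_n(\theta)-R_n(\theta^\star)\} = n\{R(\theta)-R(\theta^\star)\} + n^{1/2}\GG_n(\ell_\theta-\ell_{\theta^\star})$, Lemma~\ref{lem:separation}, the event $\mathscr{B}_n^c$, and a prior density bounded above on $\XX$ (an assumption both you and the paper invoke, though it is not literally contained in Assumption~\ref{asmp:prior})---but your single-integral computation avoids the series altogether. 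A genuine merit of your write-up is that you make explicit a uniformity point the paper glosses over: Lemma~\ref{lem:separation} asserts the quadratic lower bound only for sufficiently small $\delta$, whereas both proofs need it out to the fixed radius $K$ of $\XX$; your extension via convexity of $R$ (radial monotonicity beyond a small ball, then rescaling the constant by a factor $\delta_0^2/K^2$) closes that gap cleanly, and the same patch is needed to make the paper's shell-by-shell application of the lemma rigorous. What the paper's peeling template buys in exchange is robustness: it adapts directly to settings where the separation bound degrades with distance or differs across shells, whereas your global absorption leans on the bound being quadratic uniformly over all of $\XX$.
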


\begin{proof}
For the empirical process $\GG_n$ defined above, write 
\[ R_n(\theta) - R_n(\theta^\star) = R(\theta) - R(\theta^\star) + n^{-1/2} \GG_n(\ell_\theta - \ell_{\theta^\star}). \]
Then the Gibbs posterior numerator at $A_n \cap \XX$ can be decomposed as a sum of integrals over ``shells'' as follows:
\begin{align*}
N_n(A_n \cap \XX) & = \sum_{t=1}^{T_n} \int_{ta_n \eps_n < \|\theta-\theta^\star\|_2 < (t+1)a_n \eps_n} e^{-\omega n\{R_n(\theta) - R_n(\theta^\star)\}} \, \Pi(d\theta) \\
& \leq \sum_{t=1}^{T_n} e^{-C\omega t^2 a_n^2} \int_{\|\theta-\theta^\star\|_2 < (t+1)a_n \eps_n} e^{-\omega n^{1/2} \GG_n(\ell_\theta - \ell_{\theta^\star})} \, \Pi(d\theta),
\end{align*}
where $C > 0$ is as in Lemma~\ref{lem:separation} and $T_n \to \infty$ is to account for the intersection with $\XX$.  The exponential term outside the integral results from the bound in Lemma~\ref{lem:separation} and the fact that $n \eps_n^2 = 1$.  On the event $\mathscr{B}_n^c$, the exponent in the integrand is bounded by $\omega b_n n^{1/2} \|\theta-\theta^\star\|_2$, so 
\begin{align*}
N_n(A_n \cap \XX) \, 1(\mathscr{B}_n^c) & \leq \sum_{t=1}^{T_n} e^{-C\omega t^2 a_n^2} e^{\omega (t+1) a_n b_n} \, \Pi(\{\theta: \|\theta-\theta^\star\|_2 < (t+1)a_n \eps_n \}) \\
& \lesssim (a_n \eps_n)^d \sum_{t=1}^\infty e^{-\omega a_n^2 t^2 (C - 2b_n/a_n )} (t+1)^d,
\end{align*}
where the last inequality uses the fact that the prior for $\theta$ has a bounded density on $\XX$.  Since $b_n \ll a_n$, the difference in the exponent will be bigger than some $k > 0$ for sufficiently large $n$.  Therefore, the above sum is $\lesssim e^{-\omega k a_n^2}$, which proves the claim. 
\end{proof}

\subsection{Proof of Theorem~\ref{thm:rate}}
\label{proof:rate}

To prove Theorem~\ref{thm:rate}, we need to combine Lemmas~\ref{lem:consistency}--\ref{lem:num}.  Towards this, write 
\begin{align*}
\Pi_n(A_n) & \leq \Pi_n(A_n \cap \XX) + \Pi_n(\XX^c) \\
& = \frac{N_n(A_n \cap \XX)}{D_n} + \Pi_n(\XX^c) \\
& = \frac{N_n(A_n \cap \XX)}{D_n} \, 1(\mathscr{B}_n^c \cap \mathscr{C}_n^c) + \frac{N_n(A_n \cap \XX)}{D_n} \, 1(\mathscr{B}_n \cup \mathscr{C}_n) + \Pi_n(\XX^c) \\
& \leq \frac{2 N_n(A_n \cap \XX) \, 1(\mathscr{B}_n^c)}{\eps_n^d e^{-\omega(1+c_n)}} + 1(\mathscr{B}_n \cup \mathscr{C}_n) + \Pi_n(\XX^c). 
\end{align*}
Taking expectation, we get 
\[ P^n \Pi_n(A_n) \leq \frac{2P^n N_n(A_n \cap \XX) \, 1(\mathscr{B}_n^c)}{\eps_n^d e^{-\omega(1+c_n)}} + P^n(\mathscr{B}_n) + P^n(\mathscr{C}_n) + P^n \Pi_n(\XX^c). \]
The second, third, and fourth terms in the upper bound are $o(1)$ by Lemmas~\ref{lem:donsker}, \ref{lem:den}, and \ref{lem:consistency}, respectively.  Lemma~\ref{lem:num} says the first term in the upper bound above satisfies
\[ \lesssim a_n^d e^{-\omega (k a_n^2 - c_n)}. \]
The sequence $c_n$ is arbitrary, so we are free to take $c_n = o(a_n^2)$, in which case, the quantity in the above display vanishes, proving the claim.

\subsection{Proof of Theorem~\ref{thm:bvm}}
\label{proof:bvm}

The proof begins by showing that $e^{-\omega nR_n(\theta)}$ satisfies a locally asymptotic normality condition, that is, for every compact set $K \subset \RR^d$
\begin{equation}
\sup_{h \in K}    \bigl| \log s_n(h)-\omega h^\top V_{\theta^{\star}}\Delta_{n,\theta^{\star}}-\tfrac{\omega}{2}h^\top V_{\theta^{\star}}h \bigr| \to 0, \quad \text{in $P^n$-probability}.
\label{eq17}
\end{equation}
where $s_n(h)= e^{-\omega n \{R_n(\theta^{\star}+h n^{-1/2}) - R_n(\theta^{\star})\}}$. To show this, 
\begin{align*}
    -\omega n \{ R_n(\theta^{\star}+h n^{-1/2}) - R_n(\theta^{\star})\} & = -\omega n\{ \PP_n(\ell_{\theta^{\star}+hn^{-1/2}}-\ell_{\theta^{\star}}) \\
    & =-\omega nP(\ell_{\theta^{\star}+hn^{-1/2}}-\ell_{\theta^{\star}})-\omega n^{1/2} \GG_n(\ell_{\theta^{\star}+hn^{-1/2}}-\ell_{\theta^{\star}}).
\end{align*}
Since the loss $\ell_\theta$ is Lipschitz, it follows from Lemma~19.31 in \citet{van2000asymptotic}, that 
\begin{equation}
    \GG_n\{ \omega n^{1/2} (\ell_{\theta^{\star}+h n^{-1/2}}-\ell_{\theta^{\star}})-\omega h^\top \dot{\ell}_{\theta^{\star}} \} \to 0, \quad \text{in $P^n$-probability}.
\end{equation}
Since $R$ is twice differentiable at $\theta^{\star}$ with second derivative matrix $V_{\theta^{\star}}$, \eqref{eq17} holds.  Given that empirical risk difference has a suitable locally quadratic representation, it is intuitively clear that the Gibbs posterior distribution will take on a Gaussian shape.  Confirming this intuition requires some care, but one can follow exactly the arguments used to prove Theorem~2.1 in \citet{kleijn2012bernstein}.  Indeed, their setup concerns a sequence of statistical models with density functions $p_\theta^{(n)}$, and their misspecified Bayesian posterior distribution corresponds to our Gibbs posterior with the empirical risk replaced by $R_n(\theta) = -n^{-1}\log p_\theta^{(n)}$ and, of course, $\omega=1$.  They show that if the $\omega=1$ version \eqref{eq17} holds for their choice of $R_n$, and if the misspecified Bayes posterior concentrates around $\theta^\star$ at the root-$n$ rate, then the Bernstein--von Mises result holds.  Our Gibbs formulation uses the empirical risk $R_n$, we have established \eqref{eq17} and the Gibbs posterior's root-$n$ rate from Theorem~\ref{thm:rate}, so checking the Bernstein--von Mises property follows exactly the same steps as in Kleijn and van der Vaart.

\ifthenelse{1=1}{}{
Next, we prove the assertion of Theorem 2 conditional on a compact set $K \subset \RR^d$. Define random functions $f_n: K \times K \mapsto \RR$,
\[ f_n(g,h)=\bigg(1-\frac{\phi_n(h)s_n(g)\pi_n(g)}{\phi_n(g)s_n(h)\pi_n(h)}\bigg)_{+}, \]
where $\phi_n$ is the $\nm_d(\omega\Delta_{n,\theta^{\star}},{(\omega V_{\theta^{\star}})}^{-1})$ density, $\pi_n$ is the prior density of the rescaled parameter $h=n^{1/2}(\theta-\theta^{\star})$, $s_n$ is defined as 
\[ s_n(h)= e^{-\omega n \{R_n(\theta^{\star}+h n^{-1/2}) - R_n(\theta^{\star})\}}, \]
and the subscript ``+'' corresponds to the positive part, i.e., $\psi(x)_+ = 0 \vee \psi(x)$.  For any two sequences $g_n,h_n \in K$, we have $\pi_n(g_n)/\pi_n(h_n) \to 1$ and, hence, 
\begin{align*}
    2\log&\frac{\phi_n(h_n)s_n(g_n)\pi_n(g_n)}{\phi_n(g_n)s_n(h_n)\pi_n(h_n)}\\
    &=2(g_n-h_n)^\top \omega V_{\theta^{\star}}\Delta_{n,\theta^{\star}}+\omega h_n^ \top V_{\theta^{\star}}h_n- \omega g_n^\top V_{\theta^{\star}}g_n+o_P(1)\\
    & \qquad -(g_n-\omega\Delta_{n,\theta^{\star}})^\top \omega V_{\theta^{\star}}(g_n-\omega\Delta_{n,\theta^{\star}})+ (h_n-\omega\Delta_{n,\theta^{\star}})^\top \omega V_{\theta^{\star}}(h_n-\omega\Delta_{n,\theta^{\star}})\\
    &=o_P(1),
\end{align*}
as $n \to \infty$. Since $f_n$ depends continuously on $(g,\ h)$ and $K \times K$ is compact, we can conclude that
\[ \sup_{g,h \in K} f_n(g,h) \to 0 \quad \text{as $n \to \infty$, in $P^n$-probability}. \]
Assume $K$ contains a neighborhood of 0, so that $\Phi_n(K)>0$, where $\Phi_n$ denotes the $\nm_d(\Delta_{n,\theta^{\star}},(\omega V_{\theta^{\star}})^{-1})$ probability measure, which depends on data through $\Delta_{n,\theta^\star}$. Let $C_n$ be the event that $\Pi_n(K)>0$. For given $\eta>0$
\begin{equation*}
    \Omega_n=\{\sup_{g,h \in K}f_n(g,h)\leq \eta\}.
\end{equation*}
Also 
\begin{align}
    P^{(n)}\Vert \Pi_n^{K}-\Phi_n^K \Vert \one_{C_n} =& P^{(n)}\Vert \Pi_n^K-\Phi_n^K\Vert \one_{\Omega_n \cap C_n}+P^{(n)}\Vert \Pi_n^K-\Phi_n^K\Vert \one_{\Omega_n^c \cap C_n}\\
    \leq & P^{(n)}\Vert \Pi_n^K-\Phi_n^K\Vert \one_{\Omega_n \cap C_n}+2P^{(n)}(\Omega_n^c \cap C_n),
    \label{eq20}
\end{align}
since the total variation norm is bounded by 2. The 2nd term is $o(1)$ from \eqref{eq17}. The first term can be calculated as
\begin{align*}
    \frac{1}{2}P^{(n)}\Vert \Pi_n^K-&\Phi_n^k \Vert \one_{\Omega_n \cap C_n}=P^{(n)}\int \big(1-\frac{\mathrm{d}\Phi_n^K}{\mathrm{d}\Pi_n^K}\big)_{+}\mathrm{d}\Pi_n^K\one_{\Omega_n \cap C_n}\\
    &=P^{(n)}\int_K\bigg(1-\int_K\frac{s_n(g)\pi_n(g)\phi_n^K(h)}{s_n(h)\pi_n(h)\phi_n^K(g)}\mathrm{d}\Phi_n^K(g)\bigg)_+\mathrm{d}\Pi_n^K(h)\one_{\Omega_n \cap C_n}.
\end{align*}
For every $g,h \in K$, $\phi_n^K(h)/\phi_n^K(g)=\phi_n(g)/\phi_n(h)$, since on $K$, $\phi_n^K$ differs from $\phi_n$ only by a normalization factor. \par
By Jensen's inequality $(1-E(Y))^+ \leq E(1-Y)^+$. Hence
\begin{align*}
  \frac{1}{2}P^{(n)}\Vert \Pi_n^K-&\Phi_n^k \Vert \one_{\Omega_n \cap C_n}\\
  &\leq P^{(n)}\int \bigg(1-\frac{s_n(g)\pi_n(g)\phi_n^K(h)}{s_n(h)\pi_n(h)\phi_n^K(g)}\bigg)_+\mathrm{d}\Phi_n^K(g)\mathrm{d}\Pi_n^K(h)\one_{\Omega_n \cap C_n}\\
  & \leq P^{(n)}\int \sup_{g,h \in K}f_n(g,h)\one_{\Omega_n \cap C_n}\mathrm{d}\Phi_n^K(g)\mathrm{d}\Pi_n^K(h)\leq \eta .
\end{align*}
By Dominated Convergence Theorem and Theorem 1, we can conclude that for every compact set $K \subset \mathbb{R}^k$ containing a neighborhood of $0$, $P^{(n)}\Vert \Pi_n^K-\Phi_n^K\Vert \one_{C_n} \rightarrow 0$.\par
Now let $K_m$ be a sequence of balls centered at $0$ with radii $M_m \rightarrow \infty$. For each $m\geq1$, the above assertion holds. If we take a sequence of balls $K_n$ that traverses the sequence $K_m$ slowly enough, convergence to zero can still be guaranteed. Also the events $C_n=\{\Pi_n(K_n)>0\}$ satisfy $P^{(n)}(C_n)\rightarrow 1$ by Theorem 1. Therefore we can conclude that there exists a sequence of radii $M_n \rightarrow \infty$ and $P^{(n)}\Vert \Pi_n^{K_n}-\Phi_n^{K_n}\Vert. \rightarrow 0$. Now
\begin{equation}
    P^{(n)}\Vert \Pi_n-\Phi_n \Vert \leq P^{(n)}\Vert \Pi_n-\Pi_n^{K_n} \Vert + P^{(n)}\Vert \Pi_n^{K_n}-\Phi_n^{K_n}\Vert + P^{(n)}\Vert \Phi_n^{K_n}-\Phi_n \Vert.
\end{equation}
Here $\Vert \Pi_n-\Pi_n^{K_n}\Vert$ is bounded above by $2\Pi_n(K_n^c)$. From Theorem 1, the first term in the RHS of (12) converges to zero.  For the third term, the central limit theorem implies that $\Delta_{n,\theta^{\star}}$ converges in distribution to a $k$-dimensional normal distribution with mean vector $0$ and covariance matrix $\omega^2V_{\theta^{\star}}^{-1}P(\dot{\ell}_{\theta^{\star}}\dot{\ell}_{\theta^{\star}}^T)V_{\theta^{\star}}^{-1}$. Hence $\Delta_{n,\theta^{\star}}$ is uniformly tight.  Then it follows from Lemma~5.2 in \citet{kleijn2012bernstein} that $P^{(n)}\Vert \Phi_n^{K_n}-\Phi_n \Vert \leq 2P^{(n)}\Phi_n(\mathbb{R}^k\setminus K_n) \rightarrow 0$.
}

\ifthenelse{1=1}{
\bibliographystyle{apalike}
\bibliography{sample.bib}
}{

}

\end{document}